\numberwithin{equation}{subsection}
\newtheorem{theorem}{Theorem}[section]
\newtheorem{lemma}[theorem]{Lemma}
\newtheorem{proposition}[theorem]{Proposition}
\newtheorem{corollary}[theorem]{Corollary}
\newtheorem{thm}{Theorem}
\theoremstyle{definition}
\newtheorem{definition}[theorem]{Definition}
\theoremstyle{remark}
\newtheorem{remark}[theorem]{Remark}
\DeclareMathOperator{\GL}{GL}
\DeclareMathOperator{\Spec}{Spec}
\newcommand{\ra}{\rightarrow}
\newcommand{\bC}{\mathbb{C}}
\newcommand{\F}{\mathbb{F}}
\newcommand{\Q}{\mathbb{Q}}
\newcommand{\ol}{\overline}
\newcommand{\R}{\mathbb{R}}
\newcommand{\V}{\mathbb{V}}
\newcommand{\Z}{\mathbb{Z}}
\newcommand{\cA}{\mathcal{A}}
\newcommand{\cD}{\mathcal{D}}
\newcommand{\cF}{\mathcal{F}}
\newcommand{\cL}{\mathcal{L}}
\newcommand{\cO}{\mathcal{O}}
\newcommand{\cS}{\mathcal{S}}
\newcommand{\cT}{\mathcal{T}}
\newcommand{\cW}{\mathcal{W}}
\newcommand{\cX}{\mathcal{X}}
\newcommand{\cY}{\mathcal{Y}}
\newcommand{\cZ}{\mathcal{Z}}
\newcommand{\fm}{\mathfrak{m}}
\newcommand{\et}{{\mathrm{et}}}
\newcommand{\an}{{\mathrm{an}}}
\newcommand{\ds}{{\displaystyle}}
\DeclareMathOperator{\Gal}{Gal}
\DeclareMathOperator{\Aut}{Aut}
\DeclareMathOperator{\ad}{ad}
\DeclareMathOperator{\rig}{rig}
\definecolor{ao(english)}{rgb}{0.0, 0.5, 0.0}
\title{CM points have everywhere good reduction}
\author{Benjamin Bakker, Jacob Tsimerman}
\begin{document}

\maketitle
\begin{abstract}
    We prove that for every Shimura variety $S$, there is an integral model $\cS$ such that all CM points of $S$ have good reduction with respect to $\cS$. In other words, every CM point is contained in $\cS(\ol\Z)$. This follows from a stronger local result wherein we characterize the points of $S$ with potentially-good reduction (with respect to some auxiliary prime $\ell$) as being those that extend to integral points of $\cS$.  
\end{abstract}

\section{Introduction}

The N\'eron-Ogg-Shafarevich criterion \cite{SerreTate} says that for an abelian variety over $\Q$, one can detect (potentially) good reduction at a prime $p$, by looking at whether the representation on its $\ell$-adic Tate module is (potentially) unramified, for $\ell\neq p$. One can reinterpert this result as being purely about the moduli space of (principally polarized) abelian varieties $\cA_g$ and the $\Z_\ell$-adic local system $\cL$ on it corresponding to the universal Tate module: \textit{A point $x\in\cA_g(\Q_p)$ is in the image of $\cA_g(\Z_p)$ iff the $\Gal_{\Q_p}$-representation $\cL_x$ is unramified.}

Our goal is to generalize this result to Shimura varieties which are not of abelian-type. One case of particular interest is that of CM points, and we record the following theorem:

\begin{thm}\label{Main:CM}
    Let $S=S_K(G,X)$ be a Shimura variety over $E=E(G,X)$. There is an integral model $\cS$ over $\cO_E$ for $S$ such that every CM point of $S(\ol\Q)$ extends to a point of $\cS(\ol\Z)$.
\end{thm}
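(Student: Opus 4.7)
The plan is to reduce Theorem~\ref{Main:CM} to the local criterion highlighted in the abstract: a point $x \in S(K)$, for $K$ a $p$-adic field, extends to an integral point of $\cS$ if and only if, for some auxiliary prime $\ell \neq p$, the stalk at $x$ of the $\Z_\ell$-local system $\cL$ attached to some (equivalently, any) faithful representation of $G$ is potentially unramified as a $\Gal_K$-representation. Taking this for granted, the extension property is verified place by place.

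The condition that a point $x \in S(\ol\Q)$ extends to $\cS(\ol\Z)$ is equivalent, by the valuative criterion, to extending $x$ across every finite place $\lambda$ of $\ol\Q$. Fixing a CM point $x$, a rational prime $p$ below $\lambda$, and any auxiliary prime $\ell \neq p$, the local criterion reduces the desired extension to showing that $\cL_x$, viewed as a representation of $\Gal_{\ol\Q_p}$, is potentially unramified. Letting $\ell$ vary over primes other than $p$ disposes of the excluded prime for each $p$ in turn.

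The remaining step is purely CM-theoretic. By hypothesis $x$ arises from a Shimura subdatum $(T, h)$ with $T \subset G$ a $\Q$-torus, so the Galois action on $\cL_x$ factors through $T(\Q_\ell)$. The reciprocity law for special points (from Deligne's theory of canonical models) identifies this action, over the reflex field $E^*$ of $(T,h)$, with the $\ell$-adic avatar of the algebraic morphism $r : \Res_{E^*/\Q} \G_m \to T$ coming from the reflex norm of $h$. Decomposing $\cL_x$ into characters of $\Gal_{E^*}$, each is then the $\ell$-adic realization of an algebraic Hecke character $\chi$ of $E^*$. For any such $\chi$ and any finite place $v$ of $E^*$ with $v \nmid \ell$, local class field theory identifies $\chi_\ell|_{I_v}$ with the restriction of the local component $\chi_v$ to $\cO_{E^*_v}^\times$, which has finite order because the conductor of $\chi$ cuts out a finite quotient of the local units. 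Thus $\cL_x|_{I_v}$ has finite image for every such $v$, so $\cL_x$ is potentially unramified at $p$, as required.

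The main obstacle is verifying the reciprocity cleanly on the level of $\cL$ in the generality of Theorem~\ref{Main:CM}. For Shimura varieties not of abelian type there is no underlying family of motives giving $\cL$, so the identification of the $\Gal_{E^*}$-action on $\cL_x$ must be read off directly from the construction of the canonical model, using only the abstract characterisation of CM points as being defined over abelian extensions of $E^*$ with Galois action determined by the reflex norm. Granting this identification, the potential unramifiedness of $\ell$-adic avatars of Hecke characters is classical, and combined with the paper's local criterion yields Theorem~\ref{Main:CM}.
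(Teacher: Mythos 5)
Your proposal follows essentially the same route as the paper: invoke Theorem~\ref{main} to reduce to checking that $\cL_x$ is potentially unramified at every $v\nmid\ell$, and then verify this for CM points via the reciprocity law (the paper simply cites \cite[\S12]{milnemoduli} for this, while you sketch the argument through algebraic Hecke characters; both amount to the same thing, and your worry about reading off the reciprocity law from the canonical model in the non-abelian-type case is precisely what Milne's treatment handles). The one genuine gap is the assembly of a single global model. Theorem~\ref{main} produces, for each choice of $\ell$, an integral model $\cS$ over $\cO_E$ for which the criterion $S_v^{\ell\text{-pun}} = \cS(\cO_{\ol E_v})$ is guaranteed only at $v\nmid\ell$; at the places above $\ell$ the theorem says nothing. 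Your remark that ``letting $\ell$ vary over primes other than $p$ disposes of the excluded prime'' correctly identifies the issue but does not resolve it, since changing $\ell$ changes the model $\cS$ as well, and one still has to produce one scheme over $\cO_E$. The paper's fix is to pick two primes $\ell_1\neq\ell_2$, take the corresponding models $\cS_1,\cS_2$ from Theorem~\ref{main}, and glue $\cS_1\times_{\cO_E}\cO_E[\tfrac1{\ell_1}]$ with $\cS_2\times_{\cO_E}\cO_{E,(\ell_1)}$ along the common generic fiber $S$. This step is short but necessary, and your write-up should include it.
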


Note that in \cite{PST} this was established away from finitely many `bad' primes, and the contribution of this note is to handle precisely that bad set.

This result is a consequence of the more precise result below. Let $V$ be a faithful $\Q$-representation of $G^{\ad}$, and $_{\et}V_\ell$ the corresponding $\ell$-adic local system. Let $v\nmid \ell$ be a finite place of $E$. For a local field $F\subset \ol{E_v}$ We say a point $x\in S(F)$ is \textit{$\ell$-potentially unramified } if the Galois representation $_{\et}V_{\ell,x}$ is potentially unramified. Let $S^{\ell-pun}_v\subset S(\ol E_v)$ denote the set of potentially unramified points. 

\begin{thm}\label{main}
     Let $S=S_K(G,X)$ be a Shimura variety over $E=E(G,X)$, and fix a  prime number $\ell$. There exists an integral model $\cS$ over $\cO_E$ such that for each finite place $v\nmid\ell$ of $E$, we have that
     $$S_v^{\ell-pun}=\cS(\cO_{\ol E_v}).$$
    Moreover, if $_{\et}V_{\ell}$ has torsion-free monodromy group, one may pick $\cS$ so that $_{\et}V_{\ell}$ extends to $\cS$.
\end{thm}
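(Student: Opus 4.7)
The plan is to define the integral model $\cS$ intrinsically, as the locus where $_{\et}V_\ell$ extends, and then to verify both inclusions by a local monodromy analysis. I would first fix a projective integral model $\cS^*$ of the Baily--Borel compactification $S^*$ over $\cO_E$ (obtained by taking Proj of the integral ring of automorphic forms, using ampleness of the natural line bundle), and let $\cS_0$ be the normalization of the closure of $S$ in $\cS^*$. Inside $\cS_0$ I would take $\cS$ to be the maximal open subscheme over which $_{\et}V_\ell$ (a priori defined only on the generic fiber $S \subset \cS_0$) extends as a lisse $\Z_\ell$-sheaf. This directly yields the ``moreover'' clause when the monodromy group is torsion-free; in the general case one first trivializes torsion via a finite \'etale cover $S' \to S$, carries out the construction on $S'$, and takes $\cS$ to be the image of the resulting $\cS'$ in $\cS_0$, at the cost of only extending the local system to that cover.

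The inclusion $\cS(\cO_{\ol E_v}) \subset S_v^{\ell-pun}$ then follows at once: a point $x \in \cS(\cO_{\ol E_v})$ pulls back the lisse extension of $_{\et}V_\ell$ to a lisse sheaf on $\Spec \cO_{\ol E_v}$, which on the generic fiber is an unramified representation (unramified after a finite extension, in the non-torsion-free case). For the reverse inclusion, fix $x \in S(F) \cap S_v^{\ell-pun}$ with $F$ a finite extension of $E_v$ inside $\ol E_v$, and after enlarging $F$ assume $_{\et}V_{\ell,x}$ is actually unramified. The valuative criterion of properness applied to $\cS^* \to \Spec \cO_E$ gives an extension $\tilde x \in \cS^*(\cO_F)$, and the task is to argue the closed point of $\tilde x$ lands inside $\cS \subset \cS_0 \subset \cS^*$.

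There are two ways this could fail, and both must be ruled out using unramifiedness of $_{\et}V_{\ell,x}$. First, $\tilde x$ could hit the generic-fiber Baily--Borel boundary $S^* \setminus S$; this is precluded because the local monodromy of $_{\et}V_\ell$ around any stratum of the Baily--Borel boundary is unipotent but nontrivial (a Hodge-theoretic fact about the degeneration of the VHS on $S$ at cusps), so meeting the boundary would force ramification of $_{\et}V_{\ell,x}$. Second, and this is where I expect the main technical difficulty to lie, the image of $\tilde x$ could land in $\cS_0 \setminus \cS$ in the special fiber, at a point where the local system fails to extend across the integral model. Ruling this out requires an $\ell$-adic purity statement: if $_{\et}V_\ell$ has nontrivial ramification along a divisor $D$ in the special fiber of $\cS_0$, and $\tilde x$ meets $D$, then $_{\et}V_{\ell,x}$ itself must be ramified. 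The hypothesis $\ell \nmid v$ is used here to exclude wild phenomena; the precise mechanism would be a tame-fundamental-group computation on a formal neighborhood of the image point in $\cS_0$, possibly after a suitable modification to reduce to a strict normal crossings situation. This is the heart of the argument and is precisely where \cite{PST} had to exclude the bad primes.
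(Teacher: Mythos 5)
Your proposal correctly identifies the two failure modes to rule out and the broad strategy (unramifiedness of $_{\et}V_{\ell,x}$ should preclude specialization into the bad locus), but there is a genuine gap in the central construction: the maximal extension locus of $_{\et}V_\ell$ on a \emph{fixed} model $\cS_0$ does not coincide with $S_v^{\ell-pun}$, so defining $\cS$ that way cannot work. The issue is concrete. After a suitable alteration, locally around a codimension-two stratum where two boundary divisors $G_1,G_2$ of the special fiber meet, the abelianized local monodromy is generated by commuting unipotent elements $\sigma_1,\sigma_2$; a point $Q\in S(\ol K)$ specializing there contributes $v_Q(x_1)\log\sigma_1+v_Q(x_2)\log\sigma_2$ to the inertia action on $_{\et}V_{\ell,Q}$. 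If $\log\sigma_1=-\log\sigma_2\neq 0$, then $Q$ with $v_Q(x_1)=v_Q(x_2)=1$ is potentially unramified, yet it specializes into the locus where $\cL$ does \emph{not} extend (since $\sigma_1,\sigma_2$ are individually nontrivial). No choice of open $\cS\subset\cS_0$ can include this $Q$ and exclude the non-unramified points specializing to the same stratum. You must \emph{modify} the model — the paper performs monomial (fan) subdivisions, following Brunebarbe, to separate the cone of exponent vectors killed by the local monodromy from the complementary directions, and introduces the notion of a ``discerning'' model precisely to track when this separation has been achieved. Your appeal to ``a suitable modification to reduce to a strict normal crossings situation'' does not address this: even after reducing to SNC, you still face the linear-algebra obstruction above, and further subdivision is mandatory.

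Two secondary points. First, you cannot expect to reach an SNC (or semistable) situation over $\cO_{E_v}$ by modifications alone; in mixed characteristic one needs de Jong–type \emph{alterations}. This forces an additional layer of Galois descent (a finite group $G$ acting on the alteration with quotient dominating $\ol S$), and the paper spends real effort — Lemma \ref{etale descent model}, Proposition \ref{prop:groupextend}, and the patching in Proposition \ref{prop:Maincover} via Berkovich tubes and Lemma \ref{lem:justtube} — to descend the local-system extension and the discerning property back down. Your proposal elides this entirely. Second, the monodromy nonvanishing input (your ``Hodge-theoretic fact'') is Lemma \ref{lem:hodgefaithful}, which requires a log-smooth (toroidal) compactification with unipotent local monodromy; the Baily–Borel compactification is not log-smooth, so you should work with $\ol S$ toroidal rather than $S^*$, and first pass to a level cover to make local monodromy unipotent (and the global monodromy trivial mod $\ell$, so that logarithms make sense). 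The actual statement needed is stronger than ``nontrivial'': it is that no nontrivial product of non-negative powers of the boundary monodromies acts trivially, which is what rules out cancellation across divisors in a fixed stratum.
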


\begin{remark}
\vspace{0.1in}
    \begin{enumerate}
        \item If the monodromy group is not torsion-free then by our arguments one may still construct a Deligne-Mumford stack $\cS$ compactifying $S$ so that $_{\et}V_{\ell}$ extends. 
        \item It seems reasonable to conjecture that there is a model which works independently of $\ell$---indeed, this is true for `good' primes $v$, as we show below. It is the finitely many `bad' primes at which $S$ does not have a log-smooth model that we cannot show this.
        \item Indeed, even for proper Shimura varieties $S$, at `bad' finite places $v$ we cannot show that the $_{\et}V_{\ell,x}$ are potentially unramified for all $x\in S(E_v)$. Note that this would not follow simply from the existence of an appropriate motivic family over $S$, since such a family might a-priori have bad reduction at $v$.
        \item The proof of Theorem \ref{main} applies verbatim to any smooth variety $S/E$ with a $\Z_\ell$-local system $\cL$ which is non-extendable (or maximal) in the sense of \cite[Def 3.1]{brunebarbeshaf}, and such that $\cL_x$ is almost everywhere unramified for some (hence any) $x\in S(\ol{E})$. In particular, the latter condition holds for any local system of geometric origin.

    \end{enumerate}
\end{remark}
    
 \subsection{Acknowledgements}  Both authors benefited from many conversations with Ananth Shankar.  B. B. was partially supported by NSF grants DMS-2131688 and DMS-2401383.
\section{Background}

Let $\cO$ be a mixed characteristic $p$ DVR with fraction field $K$ and residue field $k$. 
 Let $\cX,\cX'/\cO$ be reduced separated schemes which are flat and finite type over $\cO$. 
 For an open subset $U\subset \cX$, a $U$-admissible modification $\cX'\to\cX$ (over $\cO$) is proper morphism which is an isomorphism over $U$.  If $U$ is the generic fiber, we simply refer to admissible modifications. 

 Let $X/K$ be a variety. A partial compactification for $X$ over $\cO$ is a reduced separated scheme $\cX$, flat and finite type over $\cO$ and a dense open embedding $X\subset \cX$ of $\cO$-schemes.  If $\cX_K\cong X$ we say $\cX$ is an integral model of $X$; if $\cX\to\Spec\cO$ is proper, we say $\cX$ is a compactification over $\cO$.

 If $R$ is a Dedekind domain with fraction field $K$, we say that a finite type $\cX/R$ is an integral model of $X$ if it is over every prime ideal of $R$.

\subsection{Chow's lemma}

We shall require the following version of Chow's lemma:

\begin{lemma}\label{lem:Chow}
Let $X/K$ be a quasiprojective variety and $\cX$ a compactification over $\cO$. 
 Then there exists a flat projective $f':\cX'\to\Spec\cO$ and an $X$-admissible modification $\psi :\cX'\to\cX$ over $\cO$.
\end{lemma}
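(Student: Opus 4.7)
The plan is to first apply a refined version of Chow's lemma to the proper morphism $\cX \to \Spec \cO$, producing a projective modification that is an isomorphism over the prescribed open $X$, and then to pass to the scheme-theoretic closure of $X$ in order to enforce flatness over $\cO$.

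First, I would verify that $X$ is quasi-projective not only over $K$ but also over $\cO$: a locally closed immersion $X \hookrightarrow \Pp^n_K$ composes with the open immersion $\Pp^n_K \subset \Pp^n_\cO$ to give a locally closed immersion $X \hookrightarrow \Pp^n_\cO$. Thus the hypotheses of the refined Chow's lemma (as in EGA II.5.6, or Stacks Project tag 0203) are satisfied for the proper morphism $\cX \to \Spec \cO$ together with the dense quasi-projective open $X \subset \cX$, yielding a projective $\cO$-scheme $\cX_1$ and a proper morphism $\pi \colon \cX_1 \to \cX$ over $\cO$ that is an isomorphism over $X$.

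Second, $\cX_1$ need not be flat over $\cO$. To correct this, I would let $\cX' \subset \cX_1$ denote the scheme-theoretic closure of $X$, viewed as an open subscheme of $\cX_1$ via the isomorphism in the previous step. Since $X$ lies in the generic fiber and $\cO$ is a Dedekind base, $\cX'$ has no associated points on the special fiber and is therefore flat over $\cO$. As a closed subscheme of the projective $\cO$-scheme $\cX_1$, $\cX'$ is itself projective over $\cO$, and taking the reduced induced structure (or noting that $X$ is reduced so its scheme-theoretic closure is reduced) gives a reduced separated scheme. The composition $\psi \colon \cX' \hookrightarrow \cX_1 \xrightarrow{\pi} \cX$ is then projective, and $\psi^{-1}(X) = \cX' \cap \pi^{-1}(X) = \cX' \cap X = X$ with $\psi|_X = \pi|_X = \mathrm{id}_X$, so $\psi$ is the desired $X$-admissible modification.

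The principal subtlety lies in the first step: the classical form of Chow's lemma guarantees an isomorphism only over some unspecified dense open of $\cX$, whereas here we must arrange the isomorphism locus to contain the prescribed open $X$. This forces the use of the strengthened version of Chow's lemma in which one may prescribe a dense open $U \subset \cX$ with $U \to \Spec \cO$ quasi-projective, and it is precisely the quasi-projectivity of $X$ over $\cO$ established above that makes it applicable. The rest of the argument is then a routine extraction of the flat part via scheme-theoretic closure.
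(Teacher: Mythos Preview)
Your argument is correct and follows essentially the same route as the paper: both apply a refined Chow's lemma relative to the prescribed open $X$ (the paper cites \cite[\href{https://stacks.math.columbia.edu/tag/088R}{Tag 088R}]{stacks-project} and deduces projectivity from the resulting diagram, whereas you invoke the version with a prescribed quasi-projective open and then pass to the scheme-theoretic closure of $X$ to secure flatness). One minor point: Tag 0203 is only the unrefined Chow's lemma and does not let you prescribe the open $U=X$; the version you need is precisely something like Tag 088R, so your first step really is the paper's step, after which your flatness argument is a welcome explicitation of a point the paper leaves implicit.
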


\begin{proof}
    This is essentially \cite[\href{https://stacks.math.columbia.edu/tag/088R}{Tag 088R}]{stacks-project}. In the notation of that lemma, we let $Y=\Spec\cO, U=X$. Applying the lemma, we obtain
    a commutative diagram of $\Spec\cO$ schemes:

    \[
    \begin{tikzcd}
       & X\ar[dl]\ar[d]\ar[dr]\ar[drr]& & \\
    \cX& \cX'\ar[l,"f"]\ar[r,"\phi",swap]& \cZ'\ar[r,"g",swap] &\cZ\\ 
    \end{tikzcd}
    \]
    where the maps from $X$ are open immersions, $f$ is an $X$-admissible modification, $\phi$ is an open immersion, $g$ is an $X$-admissible modification and $\cZ$ is a projective $\Spec\cO$ scheme.

    Since blowups are projective, it follows that $\cZ'$ is projective over $\Spec\cO$, and since $\cX'$ is proper and $\phi$ is an open immersion it follows that $\phi$ is an isomorphism onto a closed subscheme of $\cZ'$, and hence $\cX'$ is also projective over $\Spec\cO$.

\end{proof}

\subsection{Integral models}

\begin{lemma}\label{etale descent model}
    Let $\cO'/\cO$ be an \'etale extension of DVRs, and $X/K$, $X'/K'$ normal quasiprojective varieties with an \'etale morphism $f:X'\to X$ over $\Spec K'\to \Spec K$.  Let $U\subset X$ be the subset over which $f$ is finite \'etale and $U':=f^{-1}(U)$.  Then for any compactification $\cX'_0$ of $X'$ over $\cO'$, there is an $U'$-admissible modification $\cX'\to\cX'_0$ and a projective compactification $\cX$ of $U$ over $\cO$ fitting into the commutative diagram below.
      \[\begin{tikzcd}
      U'\ar[d]\ar[rr]&&U\ar[d]\ar[rdd,bend left]&\\
      X'\ar[dd]\ar[rr,"f"{pos=.7}]&&X\ar[dd]&\\
      &\cX'\ar[rr,crossing over,"g"{pos=.3}]\ar[from=luu,bend left, crossing over]\ar[dd]&                &\cX\ar[dd]\\
      \Spec K'\ar[rd]\ar[rr]&&\Spec K\ar[rd]&\\
      &\Spec\cO'\ar[rr]&           &\Spec\cO
    \end{tikzcd}\]

    Moreover, $g$ is a quotient by a finite group action.

\end{lemma}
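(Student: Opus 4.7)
The plan is to construct $\cX$ and $\cX'$ as quotients of a common $G$-equivariant projective integral model, where $G$ is the Galois group of a Galois closure of the finite \'etale cover $U'\to U$. First, by Lemma~\ref{lem:Chow}, I may replace $\cX'_0$ with a projective $X'$-admissible modification, so without loss of generality $\cX'_0$ is projective over $\cO'$. Next I would choose a connected finite Galois \'etale cover $\tilde U\to U$ factoring $U'\to U$, with Galois group $G$ and a subgroup $H\le G$ such that $\tilde U/H=U'$. Writing $\tilde K$ for the algebraic closure of $K$ in $K(\tilde U)$ and $\tilde\cO$ for the integral closure of $\cO$ in $\tilde K$, the hypothesis that $\cO'/\cO$ is \'etale implies $\tilde\cO/\cO$ is finite \'etale, that $\cO'\subset\tilde\cO$, and that $\tilde\cO^G=\cO$ and $\tilde\cO^H=\cO'$.

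I would then build a $G$-equivariant projective compactification $\cY$ of $\tilde U$ over $\tilde\cO$. The naive candidate is the normalization $\cY_0$ of $\cX'_0\otimes_{\cO'}\tilde\cO$ in the function field $K(\tilde U)$, which is normal, projective over $\tilde\cO$, and contains $\tilde U$ as a dense open. However, the $G$-action on $K(\tilde U)/K(U)$ need not preserve $\cY_0$, so the standard fix is to take $\cY$ to be the normalization of the scheme-theoretic closure of $\tilde U$ in the $\cO$-fiber product $\prod_{g\in G} g^{\ast}\cY_0$. Then $\cY$ is normal, projective over $\tilde\cO$, compactifies $\tilde U$, dominates each $g^{\ast}\cY_0$ by a $\tilde U$-admissible modification, and carries a $G$-action over $\cO$ extending the action on $\tilde U$. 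Setting $\cX:=\cY/G$ and $\cX':=\cY/H$ yields normal projective schemes over $\cO$ and $\cO'$ respectively (the geometric quotients exist since $\cY$ is projective and $G$ is finite), compactifying $U=\tilde U/G$ and $U'=\tilde U/H$. The map $\cX'\to\cX'_0$ is $U'$-admissible since both agree on the dense open $U'$, and $g\colon\cX'\to\cX$ is the induced $\cY/H\to\cY/G$, which expresses $\cX$ as a quotient of $\cY$ (equivalently, of $\cX'$) by a finite group action.

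The main technical obstacle will be the middle step: the naive normalization $\cY_0$ is not canonically $G$-stable, and the fiber-product-and-normalize construction is the device that rigidifies this and produces an equivariant model. Once this is in place, the remaining checks are essentially formal: normalization preserves projectivity under the finite base change $\tilde\cO/\cO$; quotients by finite groups of projective schemes exist as projective schemes; and the claim that $\cX'\to\cX'_0$ is an isomorphism over $U'$ follows from tracing the construction over the dense open $U'$, where the base changes and normalizations all restrict to the given finite \'etale cover $\tilde U\to U'\subset \cX'_0$.
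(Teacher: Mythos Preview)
Your proposal is correct and follows essentially the same route as the paper: pass to the Galois closure, use Chow's lemma to get projectivity, embed into a $G$-fold fiber product to force $G$-equivariance, and take quotients by $G$ and by the subgroup corresponding to $X'$. The only cosmetic differences are that the paper applies Chow's lemma after passing to the Galois closure rather than before, and phrases the equivariant model as the closure of the diagonal-type embedding $U''\hookrightarrow (\cX''_0)^G$ with $G$ permuting factors rather than as a product of twists $\prod_g g^*\cY_0$; these are the same construction. One small caution: your parenthetical ``equivalently, of $\cX'$'' is only literally correct when $H$ is normal in $G$, so the clean statement is that $\cX=\cY/G$ and $\cX'=\cY/H$ with $g$ the induced map---which is also how the paper uses the lemma downstream.
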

\begin{proof}
    Let $X''/K''$ and $\cX''_0/\cO''$ be the normalizations of $X'$ and $\cX_0'$ in the Galois closure of the function field extension of $X'\to X$, $G$ the Galois group, and $U''$ the preimage of $U'$.  By \Cref{lem:Chow}, up to replacing $\cX_0''$ with a $U''$-admissible modification we may assume $\cX''_0$ is quasiprojective.  Let $\cX_0^{\prime\prime G}$ be the $G$-fold fiber product of $\cX''_0$ over $\cO''$ equipped with the $G$-action by permuting factors.  Letting $\cX''\subset \cX_0^{\prime\prime G}$ be the closure of the natural $G$-equivariant embedding $U''\to \cX_0^{\prime\prime G}$, any projection $\cX''\to \cX_0''$ is a $U''$-admissible modification.  Letting $\cX$ (resp. $\cX'$) be the quotient of $\cX''$ by $G$ (resp. the Galois group of $X''/X'$), we obtain the required $\cX$ (resp. $\cX'$).  
\end{proof}

\begin{corollary}\label{complete descent model}
    Let $\hat \cO$ be the completion of $\cO$ with fraction field $\hat K$, $X/K$ a quasiprojective variety, and $\hat \cX$ an integral model of $X_{\hat K}$.  Then there exists a quasiprojective model $\cX$ of $X$ and an admissible morphism $\cX_{\hat \cO}\to \hat \cX$.
\end{corollary}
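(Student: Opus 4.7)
The strategy is to reduce to descending a coherent ideal sheaf on a projective $\cO$-scheme from $\hat\cO$ to $\cO$ by combining Artin approximation with \Cref{etale descent model}.

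First, I would fix a projective compactification $\overline\cX/\cO$ of $X$ via Nagata's theorem (and \Cref{lem:Chow}), and similarly a projective compactification $\overline{\hat\cX}/\hat\cO$ of $\hat\cX$. Form the closure $\hat\cW$ of the graph of the identity on $X_{\hat K}$ inside $\overline\cX_{\hat\cO}\times_{\hat\cO}\overline{\hat\cX}$: this is a projective $\hat\cO$-scheme with $X_{\hat K}$-admissible modifications $\hat\cW\to\overline\cX_{\hat\cO}$ and $\hat\cW\to\overline{\hat\cX}$. By Raynaud--Gruson, $\hat\cW\to\overline\cX_{\hat\cO}$ can be realized as the blow-up along a coherent ideal sheaf $\hat\cI\subset\cO_{\overline\cX_{\hat\cO}}$ that is trivial on $X_{\hat K}$.

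To descend $\hat\cI$, I would view it as a $\hat\cO$-point of a Quot scheme $Q$ of finite type over $\cO$ parametrizing such ideals. Artin approximation applied to $Q$ over the henselization $\cO^h$ produces an $\cO^h$-point approximating $\hat\cI$ modulo any desired power of the maximal ideal; passing to a finite level in the colimit $\cO^h=\varinjlim\cO_i$ of essentially \'etale DVR extensions then yields an ideal $\cI_i$ on $\overline\cX_{\cO_i}$. For sufficiently tight approximation, the blow-up $\cY_i:=\mathrm{Bl}_{\cI_i}\overline\cX_{\cO_i}$ has the property that $(\cY_i)_{\hat\cO}$ admissibly dominates $\hat\cW$. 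Applying \Cref{etale descent model} with $\cO'=\cO_i$, $X'=X_{K_i}$, $f$ the base-change map, and $\cX'_0=\cY_i$ then produces a projective compactification $\cY/\cO$ of $X$.

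Finally, the base change $\cY_{\hat\cO}$ admissibly dominates $\hat\cW$, and composing with $\hat\cW\to\overline{\hat\cX}$ yields an admissible morphism $\cY_{\hat\cO}\to\overline{\hat\cX}$. Taking the open subscheme $\cX\subset\cY$ whose base change is the preimage of $\hat\cX\subset\overline{\hat\cX}$ produces the desired model with $\cX_{\hat\cO}\to\hat\cX$ admissible. The main technical hurdle is twofold: controlling the Artin approximation precisely enough that the blow-up of the approximating ideal admissibly dominates $\hat\cW$ after base change (ideals agreeing modulo a high power of the maximal ideal give blow-ups agreeing on the generic fiber, but matching the closed-fiber behavior up to admissible modification requires extra care), and descending the open subscheme $\hat\cX\subset\overline{\hat\cX}$ to a well-defined open $\cO$-subscheme of $\cY$, which is itself a separate descent problem for the complementary closed subscheme likely handled by a further application of Artin approximation to a Hilbert scheme.
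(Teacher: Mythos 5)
Your proposal takes a genuinely different route from the paper. The paper disposes of the hard step in one line by citing a descent result of Vasiu (\cite[Claim 3.1.3.1]{vasiu}), which produces a model $\cX'_0$ over an \'etale DVR extension $\cO'/\cO$, and then invokes \Cref{etale descent model}; you instead try to re-prove the descent from $\hat\cO$ to $\cO^h$ directly via Artin approximation applied to a blow-up ideal. That is a reasonable, more self-contained strategy, but as written the central step is a genuine gap, one that you partly flag: $\mathfrak m$-adic approximation of the ideal $\hat\cI$ does \emph{not} by itself guarantee that the blow-up of the approximating ideal dominates $\hat\cW$ after base change. Domination of blow-ups is controlled by containment of integral closures of ideals (Rees valuations), and this is not an $\mathfrak m$-adically open condition; two ideals agreeing modulo an arbitrarily high power of $\pi$ can have incomparable blow-ups. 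If $\hat\cI$ contained a power $\pi^N$ the descent would be essentially trivial (since $\hat\cI/\pi^N$ already lives over $\cO/\pi^N=\hat\cO/\pi^N$), but in your set-up the cosupport of $\hat\cI$ can meet the generic-fiber boundary $\overline\cX\setminus X$, so this fails. Closing this gap requires an actual algebraization statement for the modification (or an argument that the domination locus in the parameter scheme is open), not merely approximation. There is also a smaller technical issue with the parameter space: Quot schemes parametrize $\cO$-\emph{flat} quotients, whereas $\cO_{\overline\cX_{\hat\cO}}/\hat\cI$ is not $\hat\cO$-flat; one should instead parametrize the flat closed subscheme $\hat\cW\subset \overline\cX_{\hat\cO}\times\mathbb{P}^M$ via a Hilbert scheme, and the same domination worry reappears there.

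You also correctly identify that cutting out the open $\cX\subset\cY$ matching $\hat\cX\subset\overline{\hat\cX}$ is a separate descent problem for the complementary closed subscheme, and it runs into the same difficulty. So the proposal finds the right tools (flattening by blow-up, Artin approximation, reduction to \Cref{etale descent model}) and the right obstacle, but the ``approximation implies domination'' claim must be either justified or replaced before this can substitute for the paper's appeal to Vasiu.
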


\begin{proof}
By \cite[Claim 3.1.3.1]{vasiu} the model $\hat \cX$ descends canonically to a model $\cX'_0$ of $X_{K'}$ defined over an \'etale DVR extension $\cO'/\cO$.  Now apply Lemma \ref{etale descent model}.
\end{proof}
    
Now assume that $K$ is a local field.  Let $D\subset X$ be a closed subscheme. We let $X^{\an}$ denote the Berkovich space \cite{ber} associated to $X$ through analytification and let $\cX$ denote an integral model of $X$.

The following is almost certainly well known, but we could not find a reference so provide our own proof:

\begin{lemma}\label{lem:justtube}
   Assume $\cX/\cO$ is proper. For any neighborhood $U$ of $D^{\an}$, there exists an admissible modification $\cY\to \cX$ such that if $\cD$ is the closure of $D$ in $\cY$, then the complement $U^c$ is contained in the rigid fiber of $\cY\backslash \cD$. 
\end{lemma}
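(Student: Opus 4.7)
The strategy is to take $\cY$ to be an admissible blowup of $\cX$ designed to shrink the tube of the closure of $D$ into the prescribed neighborhood $U$. Let $\cI\subset\cO_\cX$ be the ideal sheaf of the schematic closure $\overline D$ of $D$ in $\cX$, let $\pi\in\cO$ be a uniformizer, and for each $n\geq 1$ let $\cY_n\to\cX$ denote the blowup of $\cX$ along the ideal $\cJ_n:=\cI+(\pi^n)$ (followed, if necessary, by passing to the reduced flat closure of $X$ in it, so as to remain in our setting of reduced flat integral models). Since $\cJ_n$ restricts to the unit ideal on the generic fiber $X$, the map $\cY_n\to\cX$ is an isomorphism over $X$, hence $\cY_n$ is an admissible modification and is proper over $\cO$. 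The plan is to show $\cY:=\cY_n$ does the job for all sufficiently large $n$.

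Write $\cD_n$ for the schematic closure of $D$ in $\cY_n$, and $T_n\subset X^{\an}$ for the tube of $\cD_{n,s}$, i.e.\ the preimage of $\cD_{n,s}$ under specialization $\mathrm{sp}_n\colon X^{\an}\to\cY_{n,s}$. The condition that $U^c$ lies in the rigid fiber of $\cY_n\setminus\cD_n$ amounts exactly to $T_n\subset U$. To analyze $T_n$ we work locally: cover $\cX$ by finitely many affine opens $\cX_\alpha=\Spec A_\alpha$, and write $\cI|_{\cX_\alpha}=(f_1^\alpha,\ldots,f_{r_\alpha}^\alpha)$. Over $\cX_\alpha$ the blowup has a distinguished affine chart $\Spec A_\alpha[f_i^\alpha/\pi^n]_i$ whose generic fiber recovers all of $X_\alpha$ and on which $\cD_n$ is cut out by $(f_i^\alpha/\pi^n)_i$; the remaining charts $\Spec A_\alpha[f_k^\alpha/f_j^\alpha,\pi^n/f_j^\alpha]_k$ have generic fibers in which $f_j^\alpha$ is a unit and so miss $D$ generically, hence miss $\cD_n$ altogether by flatness of $\cD_n$ over $\cO$. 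Consequently, on the compact set $V_\alpha:=\mathrm{sp}_\cX^{-1}(\cX_{\alpha,s})\subset X^{\an}$ one computes
\[
T_n\cap V_\alpha=\bigl\{x\in V_\alpha : |f_i^\alpha(x)|<|\pi|^n\ \text{for all}\ i\bigr\}.
\]

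Finally, since $\cX/\cO$ is proper, $X^{\an}$ is compact and so is $U^c$, and the compact sets $V_\alpha$ cover $X^{\an}$. On each $V_\alpha$ the continuous function $M_\alpha(x):=\max_i|f_i^\alpha(x)|$ vanishes precisely on $D^{\an}\cap V_\alpha$; since $U^c\cap V_\alpha$ is compact and disjoint from $D^{\an}$, we have $M_\alpha\geq c_\alpha>0$ there for some constant $c_\alpha>0$. Taking $n$ large enough that $|\pi|^n<\min_\alpha c_\alpha$ forces $T_n\cap V_\alpha\cap U^c=\emptyset$ for every $\alpha$, whence $T_n\subset U$ as required.

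The main technical obstacle I anticipate is the explicit identification of $T_n\cap V_\alpha$ with the open polydisc $\{|f_i^\alpha|<|\pi|^n\}$. This rests on the observation that $\cD_n$ lives entirely in the distinguished chart of the blowup, which in turn requires care with flatness of $\cD_n$ over $\cO$ and with the fact that $D$ misses the generic fibers of the other charts. Once this local picture is in hand the global compactness argument is routine.
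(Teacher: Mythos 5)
Your proof is correct and takes essentially the same approach as the paper's: perform an admissible blowup so that, in a distinguished chart, the closure of $D$ becomes cut out by $(f_i/\pi^n)$, observe that the tube of its special fiber is then contained in $\{\max_i|f_i|<|\pi|^n\}$, and conclude by compactness of $X^{\an}$ (equivalently, of $U^c$). The only cosmetic difference is that you do a single blowup of $\cI+(\pi^n)$ while the paper iterates $n$ blowups of the special fiber of the closure of $D$; these produce the same distinguished chart $\Spec A_\alpha[f_i/\pi^n]$, and your one-shot version is arguably cleaner. One small imprecision worth noting: the identity $T_n\cap V_\alpha=\{|f_i^\alpha|<|\pi|^n\}$ should really be a containment $\subseteq$, since the schematic closure $\cD_n$ may be cut out by an ideal strictly larger than $(f_i^\alpha/\pi^n)$ in the distinguished chart; but as you only need the tube to be \emph{small}, the containment suffices and the argument is unaffected. (The paper's write-up has the same slight elision.)
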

\def\rig{\mathrm{rig}}
\begin{proof}

Setting $\cX=\cX_0$, we form a tower of models $\pi_{n+1}:\cX_{n+1}\to \cX$ inductively, by letting $D_n$ be the closure of $D$ in $\cX_n$ and blowing up the special fiber $(D_{n})_k$. We claim that for large enough $n$, the model $\cX_n$ suffices. To prove this, it is sufficient to work locally on $\cX$, so we consider an affine open subset $\Spec R\subset \cX$ where $D\cap\Spec R$ has defining ideal $(f_1,\dots,f_n)$.

Inductively then, $\cD_m\cap \pi_m^{-1}\Spec R$ lies in a single affine chart, and is cut out by $(\frac{f_1}{\pi^m_K},\cdots,\frac{f_n}{\pi^m_K})$, and so a valuation  $w\in (\Spec R)^{\rig}$ maps to $\cD_m$ iff $$\max_i|f_i(Q)|_w< |\pi_K^m|.$$ For a sufficiently large $n$ this lies within our tube $U$ by continuity, completing the proof. 
    
\end{proof}

\subsection{Local systems}

We refer to \cite{milnemoduli,milneintro} for background on Shimura varieties.

Let $(G,X)$ be a Shimura datum satisfying the axioms in \cite{milnemoduli}, and $S=S_K(G,X)$ be a Shimura variety with reflex field $E=E(G,X)$.  Let $V$ be a faithful $\Q$-representation of $G^{\ad}$, and $\V\subset V$ be a $K$-stable lattice. For a prime number $\ell$, we  let $_{\et}\V_\ell$ denote the corresponding $\Z_\ell$-local system. Furthermore, there exists a natural $\Z$-local system $_B\V$ on $S(\bC)$ underlying a variation of Hodge structures, with proper corresponding period map. In other words, if $\ol{S}$ is a log-smooth compactification of $S=S_K(G,X)$, then $V$ has infinite monodromy around every irreducible component of $D:=\ol{S}\backslash S$ (see \cite[Theorem 9.5]{griffiths1970periods}.

We shall require the following lemma:

\begin{lemma}\label{lem:hodgefaithful}
    Let $\cL$ be a local system underlying a VHS on a smooth complex variety $X$ with log-smooth compactification $\ol{X}\backslash X$, and assume the monodromy around each boundary divisor is unipotent. Let $Q\in \ol{X}$ and let $F_1,\dots, F_m$ be the irreducible divisors of  $D:=\ol{X}\backslash X$ containing $Q$. Let $q\in X$ be an infinitesimally nearby point to $Q$ and consider the monodromy elements $\sigma_1,\dots,\sigma_m\in \pi_1(X,q)$ corresponding to the simple loops around $F_1,\dots,F_m$. Then there is no nontrivial  product $\ds\prod_i\sigma_i^{\Z_{\geq 0}}$ which acts trivially on $\cL_q$.
\end{lemma}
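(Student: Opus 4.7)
The plan is to pass to the local picture near $Q$, where the VHS becomes a polarized VHS on a punctured polydisk with commuting unipotent monodromies, and to derive a contradiction via the Cattani-Kaplan theorem on the monodromy cone.

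First I would localize: choose an analytic polydisk $\Delta^d$ around $Q$ with coordinates $z_1,\ldots,z_d$ such that $F_i\cap\Delta^d=\{z_i=0\}$ for $i=1,\ldots,m$. The restriction of $\cL$ to $(\Delta^*)^m\times\Delta^{d-m}$ is a polarized VHS with commuting unipotent monodromies $T_i=\exp(N_i)$, $N_i$ nilpotent. Since the $N_i$ commute and $\sum_i n_iN_i$ is nilpotent, $\prod_i\sigma_i^{n_i}=\exp(\sum_i n_iN_i)$ acts trivially on $\cL_q$ iff $\sum_i n_iN_i=0$. Properness of the associated period map (equivalent to infinite monodromy around each boundary divisor, as recalled in the preceding paragraph) forces each $N_i\neq 0$, so the claim reduces to showing that $\sum_i n_iN_i\neq 0$ for any $(n_i)\in\Z_{\geq 0}^m\setminus\{0\}$.

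Next I would argue by contradiction. Suppose $\sum_i n_iN_i=0$ and set $I:=\{i:n_i>0\}\neq\emptyset$. Restricting the VHS to a sub-polydisk where $z_j$ is held at a nonzero value for each $j\notin I$ yields a polarized VHS with commuting unipotent monodromies $\{T_i\}_{i\in I}$, all with nonzero logarithms. By the Cattani-Kaplan theorem, the weight filtration $W(\sum_{i\in I}c_iN_i)$ on the nearby fiber is independent of $(c_i)\in\R_{>0}^{|I|}$; call it $W_I$. Our hypothesis $\sum_{i\in I}n_iN_i=0$, with $(n_i)_{i\in I}\in\R_{>0}^{|I|}$, gives $W_I=W(0)$, the trivial (pure) filtration. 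But since not all $N_i$ (for $i\in I$) vanish, by slightly perturbing $(n_i)$ inside $\R_{>0}^{|I|}$ one finds $(c_i)$ with $\sum c_iN_i\neq 0$, and for such a choice $W_I$ equals the monodromy weight filtration of a nonzero nilpotent, hence is nontrivial. This contradiction completes the argument.

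The hard part will be correctly invoking Cattani-Kaplan for the restricted VHS, especially in the degenerate case where $0$ lies in the cone spanned by $\{N_i\}_{i\in I}$. The essential input is Hodge-theoretic positivity: polarization forces commuting nilpotent monodromies of a polarized VHS to be positively linearly independent, reflecting the $\mathrm{SL}_2^{|I|}$-orbit structure on the limit mixed Hodge structure in which each $N_i$ acts (modulo lower weights) as the lowering operator of the $i$-th factor, so the summands of $\mathrm{Gr}^{W_I}(\sum c_iN_i)$ land in linearly independent joint-weight subspaces and cannot cancel.
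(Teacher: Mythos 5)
Your proof is correct, but it takes a genuinely different route from the paper's. The paper disposes of this lemma in one line by citing Griffiths's properness results: the point is that if $\prod_i\sigma_i^{n_i}$ were trivial for some $(n_i)\in\Z_{\geq 0}^m\setminus\{0\}$, then restricting the period map to a curve $z\mapsto(z^{n_1},\dots,z^{n_m},\ast)$ in a local polydisk yields a one-variable period map over $\Delta^*$ with trivial monodromy, which therefore extends over the puncture; this contradicts properness of the period map (Griffiths's Theorem 9.5 and the local statement in Proposition 9.11 of \cite{griffiths1970periods}). Your argument instead invokes the Cattani--Kaplan theorem on constancy of the relative weight filtration over the open monodromy cone: if $\sum_{i\in I}n_iN_i=0$ with all $n_i>0$, then $W(\sum_{i\in I}c_iN_i)$ must agree with the trivial filtration $W(0)$ for every $(c_i)\in\R_{>0}^{|I|}$, forcing $\sum c_iN_i=0$ identically, hence $N_i=0$ for all $i\in I$, contradicting the infinite local monodromy recalled before the lemma. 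Both arguments require the (implicit) hypothesis, supplied by the surrounding discussion, that each $N_i\neq 0$ and that the VHS is polarizable; you correctly flag the former. The Griffiths route is shorter given the literature reference, while yours is self-contained modulo Cattani--Kaplan and makes the positivity mechanism explicit. One small caveat: your appeal to "the $\mathrm{SL}_2^{|I|}$-orbit structure" in the final sentence is more than what is needed and slightly imprecise (the $N_i$ need not each individually act as a single lowering operator on $\gr^{W_I}$); the clean statement is simply that the trivial filtration characterizes the zero nilpotent, which is all the contradiction requires.
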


\begin{proof}
    This follows immediately from \cite[Proposition 9.11, Theorem 9.5]{griffiths1970periods}.
\end{proof}



\subsection{Local \'etale fundamental groups of semistable schemes}

\begin{lemma}\label{puncture}
Let $(R,\fm)$ be a complete regular local ring of mixed characteristic $p$, with algebraically closed residue field, and with a regular system of parameters $(x_1,\dots,x_n,y_1,\dots,y_m)$. Let 
$S=R\left[\frac1y_1,\dots\frac1y_m\right]$. Then the maximal prime-to-$p$ Galois \'etale extension $S_0$ of $S$ is generated by the prime-to-$p$ roots of the $y_i$.

\end{lemma}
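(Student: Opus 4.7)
The plan is to combine Zariski–Nagata purity of the branch locus, Abhyankar's lemma, and the strict Henselianity of complete regular local rings with algebraically closed residue field. Throughout I denote by $S_N := S[y_1^{1/N}, \dots, y_m^{1/N}]$ the extension of $S$ obtained by adjoining $N$-th roots of all the $y_i$.

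For the easy inclusion, fix $N$ coprime to $p$. Since each $y_i$ is a unit in $S$ and the derivative $NT^{N-1}$ of $T^N - y_i$ is a unit modulo this polynomial, the $S$-algebra $S[T]/(T^N - y_i)$ is finite \'etale. Moreover, since $k$ is algebraically closed of characteristic $p$, Hensel's lemma lifts $\mu_N \subset k^\times$ to $R^\times$, so the extension is in fact Galois with group $\mu_N$. Hence $S_N/S$ is a prime-to-$p$ \'etale Galois extension for every $N$ coprime to $p$, and the union of the $S_N$ is a Galois prime-to-$p$ \'etale extension of $S$.

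Conversely, let $T/S$ be any finite \'etale Galois extension of order $N$ coprime to $p$, and let $T'$ be the normalization of $R$ in $T$, which is a finite $R$-algebra \'etale over $S$. By Zariski–Nagata purity (applicable since $R$ is regular), the branch locus of $\Spec T'\to\Spec R$ is pure of codimension one, hence contained in the normal crossings divisor $V(y_1\cdots y_m)$. Since $[T':R]$ is coprime to $p$, the extension is tamely ramified along each component $V(y_i)$, so Abhyankar's lemma applies: the normalization $T''$ of $R' := R[y_1^{1/N}, \dots, y_m^{1/N}]$ in $T \otimes_S S_N$ is \'etale over $R'$.

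Now $R'$ is itself a complete regular local ring with algebraically closed residue field $k$ (a regular system of parameters is obtained from that of $R$ by replacing each $y_i$ with $y_i^{1/N}$), hence strictly Henselian; consequently $T''$ is a finite disjoint union of copies of $R'$, so $T\otimes_S S_N$ is split and $T\hookrightarrow S_N$. Taking the union over $N$ coprime to $p$ yields $S_0 = \bigcup_{(N,p)=1} S_N$, as claimed. The main delicate point is the invocation of Abhyankar's lemma in mixed characteristic on a regular local ring with normal-crossings boundary; this is standard (SGA 1, Exp.~XIII, or the corresponding Stacks Project tag), but requires knowing $R$ is regular and that the ramification is tame, which we have verified.
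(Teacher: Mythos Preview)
Your proof is correct and follows essentially the same approach as the paper: both argue via the normalization of $R$ in the given cover, invoke Abhyankar's lemma together with Zariski--Nagata purity to show the base-change to $R[y_1^{1/N},\dots,y_m^{1/N}]$ is \'etale, and conclude from the strict Henselianity of this complete regular local ring. The only cosmetic differences are that the paper applies purity after (rather than before) the Abhyankar step and takes $N$ to be the product of the ramification indices rather than the full degree.
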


\begin{proof}

 Let $W$ be a Galois \'etale extension of $S$ of degree prime-to-$p$. Let $T$ denote the normal closure of $R$ in $W$. Now for each $j\in\{1,\hdots,m\}$ we let $R_j$ and $ T_j$ denote the localization of $R$ and $ T$ at the prime ideals $(y_j)$ and $\mathfrak{y}_j$ respectively, where $\mathfrak{y}_j$ is some prime ideal of $T_j$ sitting above $(y_j)$. Now $T_j,R_j$ are discrete valuation rings. Let $e_j$ denote the ramification degree and let $e=\displaystyle\prod_j e_j$. Now let $W'$ denote the compositum of $W$ and the $e$'th roots of all the $y_i$, and let $T', T'_j$ be as before. Then by Abhyankar's lemma \cite[\href{https://stacks.math.columbia.edu/tag/0BRM}{Tag 0BRM}]{stacks-project}, $T'_j$ is unramified over $R[y_1^{\frac1{e}},\dots,y_m^{\frac1{e}}]_j$ for all $j$. By the purity of the branch locus \cite[\href{https://stacks.math.columbia.edu/tag/0BMB}{Tag 0BMB}]{stacks-project} it follows that $T'$ is unramified over $R[y_1^{\frac1e},\dots,y_m^{\frac1e}]$. This latter ring is complete with algebraically closed residue field, and so $T'=R[y_1^{\frac1e},\dots,y_m^{\frac1e}]$. The claim is thus proven.
\end{proof}

\subsection{Descending local systems under finite maps}

\begin{proposition}\label{prop:groupextend}
    Let $K$ be a local field with residue field $k$, and assume $k$ is perfect. Let $\cY/\cO_K$ be finite type, separated, flat, quasi-projective, and assume $\cY$ is normal. Let $G$ be a finite group acting on $\cY$, and let $f:\cY\ra \cX$ be the quotient. Assume the action of $G$ is free on $Y=\cY_K$.  

    Let $\cL$ be a $\Z_\ell$-local system on $X$ with torsion free monodromy. If $\cL_Y$ extends to $\cL_\cY$, then $\cL$ extends to $\cX$. 
\end{proposition}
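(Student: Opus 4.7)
The plan is to reduce the extension problem to a triviality statement for \emph{local inertia} at every geometric point of $\cX$, and then exploit the torsion-free monodromy hypothesis together with the $G$-torsor structure on $Y\to X$.

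First, I would invoke the standard fact that, since $\cX$ is normal (normality descends from $\cY$ to the finite quotient $\cY/G$) and contains $X$ as an open subscheme, a $\Z_\ell$-local system $\cL$ on $X$ extends to $\cX$ if and only if, at every geometric point $\bar x\in\cX$, the local inertia at $\bar x$ --- namely the image of
$$\pi_1\bigl((\Spec \cO_{\cX,\bar x}^{sh})_K,*\bigr) \to \pi_1(X,*)$$
--- acts trivially on $\cL$. The task thus reduces to showing triviality of this local inertia for every $\bar x$ lying in the special fiber.

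Next, fix such a $\bar x$ and lift it to a geometric point $\bar y\in\cY$, with stabilizer $H\subset G$. The standard analysis of finite group quotients on strict henselizations yields $\cO_{\cX,\bar x}^{sh} = (\cO_{\cY,\bar y}^{sh})^{H}$; writing $A:=\cO_{\cX,\bar x}^{sh}$ and $B:=\cO_{\cY,\bar y}^{sh}$, the morphism $\Spec B\to\Spec A$ is the quotient by $H$. Since $G$ acts freely on $Y$, so does $H$ on $(\Spec B)_K$, and hence $(\Spec B)_K\to(\Spec A)_K$ is a finite \'etale $H$-torsor. This gives the short exact sequence
$$1\to\pi_1\bigl((\Spec B)_K,*\bigr)\to\pi_1\bigl((\Spec A)_K,*\bigr)\to H\to 1.$$
Because $\cL_Y$ extends to $\cL_\cY$ on $\cY$, the monodromy action of $\pi_1((\Spec B)_K,*)$ on the stalk of $\cL_Y$ factors through $\pi_1(\Spec B,*) = 1$ (as $B$ is strictly henselian), hence is trivial. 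Therefore the local inertia at $\bar x$ is a quotient of the finite group $H$, and in particular is finite.

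Finally, this local inertia sits inside the monodromy group of $\cL$ on $X$, which is torsion-free by hypothesis; since a finite subgroup of a torsion-free group is trivial, the local inertia vanishes, and $\cL$ extends to $\cX$. The main technical points are the extension criterion of the first paragraph and the local identification $A=B^{H}$ together with the resulting fundamental group sequence --- both standard but requiring care where the group action picks up stabilizers on the special fiber. The torsion-free hypothesis is essential: without it a finite local inertia at $\bar x$ could persist, and this is precisely the obstruction that a generically \'etale cover with trivial monodromy is designed to kill.
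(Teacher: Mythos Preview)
Your argument is correct and hinges on the same key observation as the paper's: the local obstruction to extension is governed by a finite stabilizer mapping into the torsion-free monodromy group, hence trivially.  The packaging, however, is genuinely different.  You invoke the local-inertia extension criterion on the normal scheme $\cX$, pass to strict henselizations, and use the short exact sequence of fundamental groups for the $H$-torsor $(\Spec B)_K\to(\Spec A)_K$ to conclude that the inertial image is a quotient of $H$.  The paper instead works globally: it builds the tower $\cY_\cL\to\cY$ corresponding to the monodromy, shows directly (via the simple-transitivity of $M$ on fibres and the absence of nontrivial maps from the finite stabilizer $I$ into $M$) that $I$ acts trivially on the fibre, and then checks by hand---using completions and a Witt-vector splitting $\widehat{\cO_z}\cong\widehat{\cO_y}\otimes_{W(k(y))}W(k(z))$---that the finite intermediate covers $\cY_n/G\to\cX$ are \'etale.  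Your route is cleaner and more conceptual, avoiding the explicit \'etaleness computation; the paper's route is more constructive, producing the extending cover directly.  One small point worth making explicit in your write-up: the connectedness of $(\Spec A)_K$ and $(\Spec B)_K$ (needed for the exact sequence) follows from the normality and flatness hypotheses, since $A$ and $B$ are then normal local domains.
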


\begin{proof}

Let $y\in \cY$ be a closed point, and $I<G$ the stabilizer of $i_y:\Spec k(y)\ra \cY$.  

Let $X_{\cL}$ be the pro-scheme corresponding to the monodromy of $\cL$, with $M:=\Aut(\cY_\cL/\cY)$. Let $Y_\cL:=Y\times_X X_{\cL}$ and $\cY_{\cL}$ the normalization of $\cY$. Note by normality of $\cY$ that $\phi:\cY_{\cL}\ra\cY$ is Galois. Therefore $M\times G$ acts on $\cY_{\cL}$, and $M\times I$ acts on $\phi^{-1}(\ol y)$, while $M$ acts simply transitively on this set. Since there are no non-trivial group homomorphisms from $I$ to $M$ it follows that $I$ acts trivially on $\phi^{-1}(\ol y)$.

Let $\cY_n$ be a subcover corresponing to a finite quotient $M_n$ of $M$. Now let $z\in \cY_n$ such that $\phi(z)=y$. By the above, $I$ acts trivially on $k(z)$. We claim that $\cO_z^I$ is \'etale over $\cO_y^I$. To check this is is enough to pass to the completion. But then $\widehat{\cO_z}\cong \widehat{\cO_y}\otimes_{W(k(y))} W(k(z))$, and $I$ acts trivially on $W(k(z))$. It follows that
$\widehat{\cO_z}^I=\widehat{\cO_y}^I\otimes_{W(k(y))} W(k(z))$ and the claim follows.

Finally, it follows that $\cY_n/G$ is finite etale over $\cY/G\cong \cX$. Thus the inverse limit of the $\cY_n/G$ give an extension of $\cL$ to $\cX$ as desired.

\end{proof}


\section{Proof of Theorem \ref{main}}

For some positive integer $N$ divisible by $\ell$, we have that $S$ spreads out to $\cS$ with compactification $\cS\subset \cS'$ over $\cO_E[\frac1N]$. We let $T$ be the cover of $S$ corresponding to the level where the representation defined by  $_{\et}\V_\ell$ is trivial mod $\ell^2$, and we assume $T$ spreads out to a smooth model $\cT$ over $\cO_E[\frac1N]$ which is a Galois cover of $\cS$, and let $\pi:\cT\ra\cS$ be the natural map. Let $Q_{CM}\in T(\ol \Q)$ be a CM point. By increasing $N$, we assume further that $\cT\subset \cT'$ is a log-smooth compactification, that the Zariski-closure of $Q_{CM}$ in $\cT'$ is contained in $\cT$, and that the field of definition of $Q_{CM}$ is unramified away from $N$. Note that by our assumption, the monodromy elements around all the boundary divisors are unipotent.

The following lemma is essentially the same argument as \cite[Lemma 4.4]{PST}:

\begin{lemma}\label{lem:almostall}
    For all finite places $v\nmid N$ of $E$,  we have $S_v^{\ell-pun}=\cS( \ol{\cO_{E_v}})$.
\end{lemma}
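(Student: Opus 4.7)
The plan is to prove the two inclusions of $S_v^{\ell-pun}=\cS(\ol{\cO_{E_v}})$ separately. For $\cS(\ol{\cO_{E_v}})\subseteq S_v^{\ell-pun}$, I would observe that, after possibly enlarging $N$, the local system $_{\et}\V_\ell$ extends to $\cS$ over $\cO_E[\tfrac1N]$: pulled back to the Galois \'etale cover $T\to S$ its monodromy factors through the torsion-free pro-$\ell$ group $1+\ell^2 M_n(\Z_\ell)$, and once extended to $\cT$ it descends to $\cS$ via Proposition \ref{prop:groupextend}. Any integral point $\tilde x\in\cS(\ol{\cO_{E_v}})$ then yields an unramified specialization of $_{\et}\V_\ell$, so $x\in S_v^{\ell-pun}$.

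For the reverse inclusion I would take $x\in S_v^{\ell-pun}$, use properness of $\cS'/\cO_E[\tfrac1N]$ to extend $x$ to $\tilde x\in\cS'(\ol{\cO_{E_v}})$, and argue by contradiction: suppose the special-fiber point $x_0$ lies in the boundary $\cS'\setminus\cS$. After replacing $\cT'$ by an admissible modification via Lemma \ref{lem:Chow} so that $\pi$ extends to $\cT'\to\cS'$, I would lift $x$ to $y\in T(\ol E_v)$ via the finite \'etale Galois cover $T\to S$ and extend to $\tilde y\in\cT'(\ol{\cO_{E_v}})$. Since $\pi|_\cT:\cT\to\cS$ is \'etale, the special-fiber point $y_0$ cannot lie in $\cT$ (else $x_0=\pi(y_0)\in\cS$), so $y_0$ sits on some boundary divisors $F_1,\dots,F_m$ of $\cT'\setminus\cT$ with $m\geq 1$.

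The core step is then to localize at $y_0$ using the log-smooth structure of $\cT'$: the completed strict henselization of $\cO_{\cT',y_0}$ satisfies the hypotheses of Lemma \ref{puncture}, with $F_1,\dots,F_m$ cut out by part of a regular system of parameters, so the tame local \'etale fundamental group of the punctured formal neighborhood is topologically generated by simple loops $\sigma_1,\dots,\sigma_m$ around the $F_i$. Using Lemma \ref{lem:justtube} to control a tube around $y_0$ in $(\cT')^{\an}$, the inertia action of $I_{E_v}$ on $_{\et}\V_{\ell,x}$ factors through these $\sigma_i$ acting via the monodromy of $_{\et}\V_\ell|_T$, which lies in $1+\ell^2 M_n(\Z_\ell)$. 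Potential unramifiedness of $_{\et}\V_{\ell,x}$ forces a positive power of each $\sigma_i$ to act trivially, and torsion-freeness then forces each $\sigma_i$ itself to act trivially on the stalk; this contradicts Lemma \ref{lem:hodgefaithful} applied to the underlying VHS $_B\V$.

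The main obstacle will be this last matching of the Galois inertia action at $v$ with the local monodromy around the boundary divisors in the punctured formal neighborhood of $y_0$, and then exploiting torsion-freeness to conclude that each $\sigma_i$ acts trivially. This is where the passage to the cover $T$ (with its mod $\ell^2$ triviality), the tube control of Lemma \ref{lem:justtube}, and the faithfulness of the VHS monodromy from Lemma \ref{lem:hodgefaithful} all combine to yield the contradiction.
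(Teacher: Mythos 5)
Your reverse inclusion $S_v^{\ell\text{-pun}}\subseteq\cS(\ol{\cO_{E_v}})$ follows the paper's route: pass to $T$, reduce the putative bad point into the boundary of $\cT'$, read off the inertia image as a monomial in the local monodromy generators, and contradict Lemma~\ref{lem:hodgefaithful}. A minor imprecision: potential unramifiedness gives that a single element of the form $\prod_i\sigma_i^{ka_i}$ (with $a_i=v_Q(x_i)>0$) acts trivially, not ``a positive power of each $\sigma_i$''; one then invokes torsion-freeness to drop the $k$, and Lemma~\ref{lem:hodgefaithful} directly rules out the resulting nontrivial product. The conclusion is still reached, and the invocation of Lemma~\ref{lem:justtube} is superfluous here (the paper does not use it in this lemma).

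The real gap is in your forward inclusion. You assert that ``after possibly enlarging $N$, the local system $_{\et}\V_\ell$ extends to $\cS$ over $\cO_E[\tfrac1N]$'' because the monodromy on $T$ lands in the torsion-free pro-$\ell$ group $1+\ell^2 M_n(\Z_\ell)$. But torsion-freeness of the \emph{global} monodromy does not by itself force the local monodromy around the special fiber of $\cT$ to vanish: that local group is (prime-to-$p$) pro-cyclic by Lemma~\ref{puncture}, hence itself torsion-free, so there is no a priori obstruction to a nontrivial image. The paper's proof needs exactly one arithmetic input here, namely that the fixed CM point $Q_{CM}$ extends to $\cT(\ol{\cO_{E_v}})$ and has \emph{unramified} Galois representation at every $v\nmid N$; via Lemma~\ref{puncture} this forces the monodromy generator around the special fiber to act trivially, and only then does $\cL$ extend to $\cT$. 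This is the content flagged in Remark~(4): the argument requires a point $x$ with $\cL_x$ almost everywhere unramified, which for Shimura varieties is supplied by a CM point. Your proposal never uses $Q_{CM}$, so this step is unjustified. (The subsequent descent of the extended $\cL$ from $\cT$ to $\cS$ via Proposition~\ref{prop:groupextend} is fine, and is in fact cleaner than strictly necessary: the paper avoids it by phrasing the whole lemma on $\cT$, using $\pi^*\cS(\ol{\cO_{E_v}})=\cT(\ol{\cO_{E_v}})$.)
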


\begin{proof}

    Let $\cL:=\pi^*_{\et}V_\ell$ be the corresponding $\ell$-adic local system on $\cT$. We claim that $\cL$ must have trivial monodromy around the special fiber of $\cT$, which follows by Lemma \ref{puncture} since $Q_{CM}$ has everywhere potentially good reduction. Thus, $\cL$ must extend to $\cT$. We let $K=E_v$.
    
    Note that $\pi^*\cS( \ol{\cO_{E_v}})=\cT( \ol{\cO_{E_v}})$ and so it suffices to prove that 
    $T_v^{\ell-pun}= \cT( \ol{\cO_{E_v}})$ where $T_v^{\ell-pun}$ is defined relative to $\cL$. So suppose that $Q\in T_v^{\ell-pun}(\ol{E_v})\backslash \cT( \ol{\cO_{E_v}})$ and let $z\in \cT'(\ol\F_v)$ be the reduction of $Q$ at the special fiber.  Finally, we denote by $\cT'_{un}$ the base change of $\cT'$ to the maximal unramified extension $K_{un}$ of $K$.

    Let $x_1,\dots,x_n\in R:=\widehat{\cO}_{\cT',z}$ be a regular sequence cutting out the irreducible components of the boundary divisors of $\cT'\backslash\cT$ at $Q$, and let $\sigma_1,\dots,\sigma_n$ be the generators of the natural $\Z_\ell(1)^n$ quotient of $\pi_1(R[\frac{1}{x_1},\ldots,\frac{1}{x_n}])$ corresponding to the complex loops around those same boundary divisors with respect to an identification $\ol{K}\cong\bC$.
    
    Identifying the maximal $\ell$-power quotient of $\pi_1(\cO_{K_{un}})$ with $\Z_\ell(1)$, the natural map
    $$\pi_1\left(R\left[\frac{1}{x_1},\ldots,\frac{1}{x_n}\right]\right)\ra \pi_1(\cO_{K_{un}})$$ is naturally identified with 
    $$(a_1,\dots,a_n)\ra \displaystyle\sum_{i=1}^n a_iv_Q(x_i).$$ Thus, since $Q\in T_v^{\ell-pun}$, we conclude that $\prod_{i=1}^n \sigma_i^{a_i}$ acts trivially on $\cL_Q$. However, this contradicts Lemma \ref{lem:hodgefaithful}, which completes the proof.

\end{proof}
Since we may simply glue in integral models at finitely many places, by \Cref{complete descent model} we have reduced ourselves to proving the following:

\begin{proposition}\label{prop:badprimes}
    Let $v\nmid \ell$ be a finite place of $E$. There exists an integral model $\cS$ of $S_v$ over $\cO_{E_v}$ such that $S_v^{\ell-pun}= \cS( \ol{\cO_{E_v}})$. Morover, if $\cL$ has torsion-free monodromy, we may choose $\cS$ so that $\cL$ extends.
\end{proposition}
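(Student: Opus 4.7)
The approach adapts the proof of \Cref{lem:almostall} to the bad prime $v$, compensating for the absence of a log-smooth integral model by combining log-smoothness of the generic fiber with the admissible modifications furnished by \Cref{lem:justtube}.

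First, we reduce to constructing a model upstairs on the level cover $T$. The monodromy of $\cL|_T$ factors through the torsion-free pro-$\ell$ group $\ker(\GL_r(\Z_\ell)\to\GL_r(\Z/\ell^2))$, and any continuous inertia representation into a torsion-free pro-$\ell$ group that becomes trivial on a finite-index open subgroup is already trivial (a finite quotient admits no nontrivial map into a torsion-free group). Hence on $T$, $\ell$-potentially unramified is equivalent to unramified, and it suffices to find an integral model $\cT/\cO_{E_v}$ of $T_v$ with $\cT(\ol{\cO_{E_v}})$ equal to the unramified locus in $T_v(\ol{E_v})$. We then obtain $\cS$ as the quotient by the Galois action of $G:=\Gal(T/S)$ (as in \Cref{etale descent model}), and the moreover statement follows from \Cref{prop:groupextend}.

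To build $\cT$: choose by Hironaka a log-smooth compactification $T_{E_v}\subset\ol T$ with simple-normal-crossings boundary divisor $D$, and by \Cref{lem:Chow} spread $\ol T$ out to a projective model $\cT'_0/\cO_{E_v}$. Let $U\subset\ol T^{\an}$ denote the Berkovich complement of the unramified locus. After blowing up $\cT'_0$ locally to make the relevant local rings regular at the reduction of a chosen $Q$, a direct variant of the argument of \Cref{lem:almostall} applies: by \Cref{puncture} the prime-to-$p$ local fundamental group has a $\Z_\ell(1)^n$ quotient with generators $\sigma_i$, and for any unramified $Q$ reducing into $\cD:=\overline{D}^{\cT'_0}$ one obtains the relation $\prod\sigma_i^{v_Q(x_i)}=1$ in $\End(\cL_Q)$, which is ruled out by \Cref{lem:hodgefaithful} since the $x_i$ cut out boundary divisors of $\ol T$. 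Hence $U$ is an open neighborhood of $D^{\an}$. Applying \Cref{lem:justtube} to $(\cT'_0,D,U)$ produces an admissible modification $\cT'_1\to\cT'_0$ such that the tube of $\cD_1:=\overline{D}^{\cT'_1}$ is contained in $U$; setting $\cT:=\cT'_1\setminus\cD_1$ gives a model with the desired matching of $\ol{\cO_{E_v}}$-points and unramified points, and the same local monodromy analysis shows that $\cL$ extends over $\cT$.

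The main obstacle is justifying that $U$ is genuinely an open neighborhood of $D^{\an}$, which is what licenses the application of \Cref{lem:justtube}. At a bad prime the local ring of $\cT'_0$ at the reduction of an unramified $Q$ need not be regular, so the regular-sequence argument underlying \Cref{lem:almostall} cannot be invoked directly; instead one must pass to a local blow-up (or de Jong alteration) to make the ring regular, apply \Cref{puncture} and \Cref{lem:hodgefaithful} there, and then transport the conclusion back. These local modifications are absorbed into the global admissible modification $\cT'_1$ produced by \Cref{lem:justtube}, yielding the required model.
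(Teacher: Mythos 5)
Your reduction to the level cover $T$ is sound (and close in spirit to the paper's \Cref{lem:redtouni}), and you have correctly identified \Cref{lem:justtube} as a tool. However, the proof has a genuine gap in the central step, and it is precisely the gap that the paper's machinery of ``discerning'' models (\Cref{discerning}, \Cref{prop:trueforreg}) is designed to fill.

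Your construction only establishes one containment. You arrange that the tube of $\cD_1$ in $\cT'_1$ is contained in the non-unramified locus $U$, so that every unramified point extends to $\cT=\cT'_1\setminus\cD_1$. But the required equality $T_v^{\mathrm{un}}=\cT(\ol{\cO_{E_v}})$ also needs the converse: every integral point of $\cT$ must be unramified. This would follow if $\cT$ were (log-)smooth over $\cO_{E_v}$, by Abhyankar and purity; but $\cT'_1$ is merely a projective integral model obtained from Chow's lemma and an admissible modification, with no control on its singularities or on the structure of its special fiber. Nothing in the argument prevents $\cT$ from containing integral points whose inertia acts nontrivially.

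This is exactly where the bad prime bites. At such a prime there is no semistable model of $\ol T$, so the reduction of a point $Q$ may lie on a stratum of a (semistable replacement of the) model where the irreducible boundary divisors come in two kinds: $F_1,\dots,F_n$, lying over the generic-fiber boundary $D$, and $G_1,\dots,G_m$, contained entirely in the special fiber. Only the monodromy around the $F_i$ is constrained by \Cref{lem:hodgefaithful}; the monodromy around the $G_j$ is an arithmetic phenomenon with no Hodge-theoretic control. Whether $\prod_i\sigma_i^{v_Q(x_i)}\prod_j\tau_j^{v_Q(y_j)}$ acts trivially then depends on the precise tuple of valuations, and neither conclusion is forced. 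The paper handles this by taking logs, considering the cone $P$ of non-negative solutions, and performing a toric subdivision so that after the resulting admissible modification the answer is determined purely by which boundary divisor the point reduces to (\Cref{prop:trueforreg}). Your proof contains no analogue of this step, and the remark ``after blowing up $\cT'_0$ locally to make the relevant local rings regular'' does not substitute for it: once you blow up you introduce new special-fiber divisors $G_j$, and the problem reappears.

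There is a secondary structural issue: one cannot expect a single semistable (or even regular) compactification of $\ol T$ over $\cO_{E_v}$ at a bad prime. The paper instead produces, by de Jong alterations (\cite[Thm 4.5]{BS}) plus Noetherian induction, a finite collection of $\ol S$-schemes $X_i$ with semistable models, each finite \'etale over an open $V_i$ covering $\ol S$; the ``discerning'' condition is established on each piece, descended via \Cref{etale descent model} and \Cref{prop:groupextend}, and only then does a tube argument (the analogue of your \Cref{lem:justtube} step) glue these into a single model, followed by a common admissible modification. Your proposal compresses this multi-model gluing into a single tube construction, but without the discerning machinery the resulting single model has no reason to satisfy the required biconditional.
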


We first reduce to the case where the monodromy of $_{\et} \V_\ell$ is trivial mod $\ell$ (and hence pro-$\ell$, torsion free and prime-to-$p$). To that end, let $f:W\ra S$ denote the $\ell^2$-level cover of $S$.

\begin{proposition}\label{lem:redtouni}
    Proposition \ref{prop:badprimes} for $W$ implies Proposition \ref{prop:badprimes} for $S$.
\end{proposition}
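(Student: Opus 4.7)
The plan is to realize $\cS$ as the quotient of the integral model of $W$ by the Galois group $G := \Gal(W/S)$ of $f$. We first establish two elementary facts about $\ell$-potential unramifiedness: $S_v^{\ell-pun} = f(W_v^{\ell-pun})$, and $W_v^{\ell-pun}$ is $G$-stable. The first holds because, given $x \in S(\ol{E_v})$ and a lift $\tilde x \in W(\ol{E_v})$, the stalk of $f^*{}_{\et}\V_\ell$ at $\tilde x$ is the restriction of ${}_{\et}\V_{\ell,x}$ to the open subgroup $\Gal(\ol{E_v}/k(\tilde x)) \subset \Gal(\ol{E_v}/k(x))$, and potential unramifiedness of a Galois representation is equivalent to that of its restriction to any open subgroup. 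The second holds because $f \circ \sigma = f$ for all $\sigma \in G$, so the $G$-translates of a point carry isomorphic Galois representations.

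Next, let $\cW_0$ be the integral model of $W_v$ supplied by the hypothesis, so $\cW_0(\ol{\cO_{E_v}}) = W_v^{\ell-pun}$. We follow the construction in the proof of \Cref{etale descent model} to make $\cW_0$ into a $G$-equivariant model: after first replacing $\cW_0$ by a quasi-projective admissible modification via \Cref{lem:Chow}, form the $G$-fold fiber product $\cW_0^G$ over $\cO_{E_v}$ with the permutation $G$-action and let $\cW$ be the closure of the $G$-equivariant diagonal $W \hookrightarrow \cW_0^G$. The projection $\cW \to \cW_0$ is a proper admissible modification, so the valuative criterion of properness gives $\cW(\ol{\cO_{E_v}}) = \cW_0(\ol{\cO_{E_v}}) = W_v^{\ell-pun}$. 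Now set $\cS := \cW/G$, a quasi-projective integral model of $S_v$.

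The equality $\cS(\ol{\cO_{E_v}}) = S_v^{\ell-pun}$ then follows from the two facts above together with surjectivity of $\cW(\ol{\cO_{E_v}}) \to \cS(\ol{\cO_{E_v}})$: given $\bar x \in \cS(\ol{\cO_{E_v}})$, the base change $\cW \times_\cS \Spec \ol{\cO_{E_v}}$ is finite and surjective over $\Spec \ol{\cO_{E_v}}$, and since $\ol{\cO_{E_v}}$ is integrally closed in $\ol{E_v}$ any dominating irreducible component is isomorphic to $\Spec \ol{\cO_{E_v}}$, yielding the required lift to $\cW(\ol{\cO_{E_v}}) = W_v^{\ell-pun}$. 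For the moreover clause, torsion-free monodromy for ${}_{\et}\V_\ell$ on $S$ implies the same for its pullback $f^*{}_{\et}\V_\ell$ on $W$ (a subgroup of a torsion-free group is torsion-free); by the hypothesis for $W$ this pullback extends to $\cW$, and since $G$ acts freely on $W$, \Cref{prop:groupextend} applied to $\cW \to \cS$ yields the desired extension of ${}_{\et}\V_\ell$ to $\cS$. The main technical subtlety is the equivariant modification step, handled uniformly by the closure-in-fiber-product construction above.
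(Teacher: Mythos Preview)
Your approach is essentially identical to the paper's: both take the closure of the $G$-orbit embedding $W\hookrightarrow\cW_0^G$ to obtain a $G$-equivariant model and then form the quotient by $G$; for the extension of $\cL$ the paper gives the direct torsion-vs.-torsion-free inertia argument that is precisely the content of \Cref{prop:groupextend}.

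Two small slips worth fixing. First, the projection $\cW\to\cW_0$ is not a proper morphism in general (the remaining factors of $\cW_0^G$ are not proper over $\cW_0$), so the valuative criterion does not apply; however, the desired equality $\cW(\ol{\cO_{E_v}})=W_v^{\ell-pun}$ follows immediately from the $G$-stability of $W_v^{\ell-pun}=\cW_0(\ol{\cO_{E_v}})$ that you already established, since a $\ol K$-point of $W$ extends to $\cW\subset\cW_0^G$ iff each of its $G$-translates extends to $\cW_0$. Second, you should normalize $\cW$ (as the paper does with its $\cW_1$) before invoking \Cref{prop:groupextend}, which requires the total space to be normal; normalization does not disturb the $\ol{\cO_{E_v}}$-points.
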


\begin{proof}
Assume that Proposition  \ref{prop:badprimes} is true for $W$ with the model $\cW$. By Lemma \ref{lem:Chow} we may assume $\cW$ is projective. Let $G$ be the Galois group of $f$, and consider the map $h:W\ra \cW^G$ given by $h(w)_g=g\circ f(w)$. Finally, let $\cW_1$ denote the normalization of the closure of $h(W)$. Then $\cW_1$ is also an integral model of $W$, and from construction $W_v^{\ell-pun}=\cW_1(\ol{\cO_{E_v}})$. Moreover there is a natural group action of $G$ on $\cW_1$, and the quotient $\cS_1$ is an integral model for $S$.

Finally, we claim that $\cL$ extends to $\cS$. Since $\cS$ is normal it is enough to prove this locally. But the image of inertia around every point must be simultaneously torsion and yet contained in the monodromy image of $\cL$, which is torsion-free. Hence the image of inertia is trivial, which means that $\cL$ extends.
\end{proof}
Henceforth we take $K=E_v$, $k=\F_v$, and suppress the subscript $v$ in $S_v$.  By \Cref{etale descent model} we may pass to the $\ell^2$-level cover, and therefore assume $\cL:=_{\et} \V_\ell$ has unipotent local monodromy. Let $\ol{S}$ denote a log-smooth compactification of $S$ over $K$. To streamline the proof we introduce the following terminology:  

\begin{definition}\label{discerning}
Let $\phi:T\ra \ol S$ be a proper $\ol S$-scheme, and let $\cT$ be a model of $T$ over $\cO_{K}$. We say that $\cT$ is \textit{discerning} if there is a \textit{distinguished} subset $Z\subset \cT(\ol k)$ such that
\begin{enumerate}
    \item for all $x\in \cT(\ol{\cO_{K}})$ such that $\phi(x_{\ol{K}})\in S(\ol{K})$, the local system $\phi_{x_{\ol K}}^* \cL$ is potentially unramified iff $x_{\ol k}\not\in Z$. 
    \item $Z$ contains the $\ol k$-points of the closure of $\phi^{-1}(\ol S\backslash S)$.
\end{enumerate}
\end{definition}    

Note that if $\cT$ is discerning then any $\cT$-scheme is discerning as well.  Also, since every $\ol k$-point of $\cT$ lifts to a $\ol{\cO_K}$-point of $\cT$ whose $\ol K$-point is contained in $S$, $Z$ is uniquely determined.




\begin{proposition}\label{prop:trueforreg}
    Let $\phi:X\to \ol S$ be a proper $\ol{S}$-scheme which is smooth over $K$, with a semistable compactification $\cX'$ over $\cO_K$. 
    
    \begin{enumerate} 
    
        \item If $\cX'$ itself is discerning, then the distinguished set $Z\subset \cX'(\ol k)$ is closed.
    
        \item There exists an admissible modification of $\cX'$ which is discerning. Moreover, if the monodromy-image of $\cL$ is torsion-free, we may pick this modification such that $\cL$ extends to the complement of $\phi^{-1}(\ol S\backslash S)\cup \ol Z$ in $\cX'$.
       
    \end{enumerate}
\end{proposition}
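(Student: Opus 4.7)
For part~(1), assume $\cX'$ is discerning and fix $\ol x\in\cX'(\ol k)$. Choose a regular system of parameters of $\widehat\cO_{\cX',\ol x}$ compatible with the semistable/log-smooth structure: the special fiber components through $\ol x$ are $\{y_i=0\}_{i=1}^m$ with $\pi=y_1\cdots y_m$, and the horizontal components of $\ol{D_X}$ through $\ol x$ (with $D_X=X\setminus\phi^{-1}(S)$) are $\{f_j=0\}_{j=1}^k$. By Lemma~\ref{puncture}, the tame pro-$\ell$ fundamental group of the punctured formal neighborhood is $\Z_\ell(1)^{m+k}$, generated by $\sigma_i,\tau_j$. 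Following the argument of Lemma~\ref{lem:almostall}, for any lift $x\in\cX'(\ol{\cO_K})$ with $x_{\ol K}\in X_0$ specializing to $\ol x$, the image of inertia of $\Gal(\ol K/K)$ on $\phi^*\cL$ is generated by the unipotent element $\exp(\sum_i a_iN_i+\sum_j b_jM_j)$, where $N_i=\log\sigma_i$, $M_j=\log\tau_j$, and $a_i=v(x^*y_i),\, b_j=v(x^*f_j)$ are strictly positive. Potential unramification is thus equivalent to the nilpotent sum vanishing. The discerning dichotomy forces: either $N_i=M_j=0$ for all $i,j$ (so $\ol x\notin Z$), or no positive combination vanishes (so $\ol x\in Z$). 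Hence $Z$ equals $\ol{D_X}\cup\bigcup_i C_i$, where $C_i$ runs over the special fiber components with nontrivial $N_{C_i}$---a finite union of closed subvarieties, and so $Z$ is closed.

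For part~(2), the obstruction to being discerning at $\ol x$ is a ``conspiracy'' $\sum a_iN_i+\sum b_jM_j=0$ with positive $(a_i,b_j)$ and not all $N_i,M_j$ trivial. By Lemma~\ref{lem:hodgefaithful} no such conspiracy involves only the $M_j$'s, so each must involve a ``bad'' special fiber component (with $N_C\neq 0$) meeting another divisor along a stratum realizing the relation. The plan is to build $\cX''\to\cX'$ by iteratively blowing up such conspiracy strata. For a blow-up with center $W\subset\cX'_{\ol k}$, each exceptional divisor $E$ has $N_E$ expressible as a positive combination of the $N_i,M_j$ already supported at $W$; hence any new conspiracy involving $N_E$ descends to a pre-existing relation at $W$. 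Since there are only finitely many possible linear dependences among the finitely many distinct nilpotents (which live in $\End(\phi^*\cL_*)$), a complexity measure---for instance the dimension of the linear span of conspiracies still realized at $\ol k$-points of the current model---strictly decreases, so the procedure terminates in finitely many steps, yielding a discerning $\cX''$.

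The main obstacle will be making this termination argument precise: verifying that each exceptional $N_E$ genuinely reduces the realizable-conspiracy complexity, which will likely require a careful toric/log-geometric analysis at each blow-up step (potentially organized via a fan refinement on the log scheme $\cX'$). For the extension claim, assume $\cL$ has torsion-free monodromy. At every point of $\cX''\setminus(\ol{D_X}\cup\ol Z)$, the local monodromy of $\phi^*\cL$ around every divisor through that point is trivial by the construction of $Z$; by Lemma~\ref{puncture} the local system $\phi^*\cL$ therefore extends étale-locally, and torsion-freeness combined with the descent argument of Proposition~\ref{prop:groupextend} glues these extensions into the required global extension.
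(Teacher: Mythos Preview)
Your argument for part~(1) is essentially the paper's: the semistable local picture plus Lemma~\ref{puncture} reduces potential unramification at a lift $Q$ to the vanishing of $\sum_i a_i N_i + \sum_j b_j M_j$ with $a_i,b_j>0$, and the discerning hypothesis forces the all-or-nothing dichotomy at each stratum, so $Z$ is a union of boundary divisors and hence closed.

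For part~(2), however, there is a genuine gap. Your iterative blow-up scheme with a complexity measure is not actually carried out, and the proposed measure (dimension of the span of realizable conspiracies) does not obviously strictly decrease: blowing up a stratum replaces several divisors by their strict transforms together with exceptional divisors whose nilpotents are positive combinations of the old ones, and the same linear relations can persist at points of the exceptional locus. You yourself flag this as the ``main obstacle,'' which is an honest assessment---but it means the proof is incomplete as written.

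The paper sidesteps the termination issue entirely by working one stratum $W$ at a time and doing a single toric step. At $W$ one forms the rational polyhedral cone
\[
P=\Bigl\{(a,b)\in\Q_{\geq 0}^{m+n}:\sum_i a_i\log\phi(\sigma_i)+\sum_j b_j\log\phi(\tau_j)=0\Bigr\}.
\]
Lemma~\ref{lem:hodgefaithful} says $P\cap(\Q_{\geq 0}^m\times 0)=\{0\}$, so one can choose an integral fan refinement $\cF$ of the standard cone on $\R_{\geq 0}^{m+n}$ for which $P$ is a union of cones \emph{and} which does not subdivide the horizontal face $\R_{\geq 0}^m\times 0$. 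The corresponding monomial modification $\cX''_W\to\cX'$ is then automatically admissible (an isomorphism on the generic fiber, since the horizontal face is untouched), and it is discerning over $W$ by construction: on each cone of $\cF$ the relation either holds identically or never holds nontrivially. Taking a common normal refinement over all strata finishes the job. Choosing $\cF$ simplicial keeps the modification semistable, which is what makes the extension of $\cL$ immediate on the complement of $Z$.

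So the missing idea is precisely the one you gesture at in your parenthetical: replace the inductive blow-up-and-hope argument by a one-shot fan subdivision that separates the relation cone $P$ from its complement, using Lemma~\ref{lem:hodgefaithful} to guarantee this can be done without modifying the generic fiber.
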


\begin{proof}

    Let $W\subset \cX'$ be a (locally closed) boundary stratum contained in the special fiber. Let $F_1,\dots,F_n$ be the irreducible boundary divisors not contained in the special fiber which contain $W$, $G_1,\dots,G_m$ the irreducible boundary divisors in the special fiber containing $W$, and $H$ the union of the boundary divisors not containing $W$. Let $z\in W$ be the generic point. Finally, let $x_1,\dots,x_m,y_1,\dots,y_n$ be a subset of the regular sequence of generators of $R=\widehat{\cO}_{\cX',z}$ cutting out $F_1,\dots,F_m,G_1,\dots,G_n$. 

Let  $\sigma_1,\dots,\sigma_m,\tau_1,\dots,\tau_n$ be the generators of the natural $\Z_\ell(1)^{m+n}$ quotient of $\pi_1(R[\frac{1}{x_1},\ldots,\frac{1}{x_m},\frac{1}{y_1},\ldots,\frac{1}{y_n}])$ corresponding to the complex loops around the boundary divisors.  Let $Q\in X(\ol K)$ be a point mapping to $S$ and extending to a point in $\cX'(\ol{\cO_K})$ whose reduction lands in $W$. By Lemma \ref{puncture},  we have that
$\prod_{i=1}^m \sigma_i^{v_Q(x_i)}\cdot \prod_{j=1}^n \tau_j^{v_Q(y_j)}$ acts trivially on $_{\et} \V_{\ell,z}$ iff $Q\in S^{\ell-pun}$.

Let $\phi:\pi_1(R[\frac{1}{x_1},\ldots,\frac{1}{x_m},\frac{1}{y_1},\ldots,\frac{1}{y_n}])\ra \GL(\cL_z)$ denote the monodromy map. Since the image of $\phi$  is $1\mod \ell$ and abelian, we may take logarithms and conclude that $Q\in S^{\ell-pun}$ iff 
$$\sum_{i=1}^m v_Q(x_i)\log\phi(\sigma_i) +  \sum_{j=1}^n v_Q(y_j)\log\phi(\tau_j)=0.$$ 

For part 1, note that for $z\in W(\ol k)$ we may pick a lift $Q$ with any positive integral values of the $v_Q(x_i),v_Q(y_j)$. Therefore to be a discerning model, it must be true that either all the $\log\phi(\sigma_i)$ are $0$, or that there are no linear relations between them with $\Q_{>0}$-coefficients. Moreover, this must be true for every stratum $W$. If this is satisfied, $Z$ is simply the union of the $G_i(\ol k)$ for which the corresponding element $\log \phi(\sigma_i)$ vanishes, and is therefore closed.

We now prove part 2. What follows is a version of \cite[Proposition 3.5]{brunebarbeshaf}.  Consider the set $P$ of non-negative rational solutions $(a,b)\in\Q_{\geq 0}^{m+n}$ to 
$$\sum_{i=1}^m a_i\log\phi(\sigma_i) +  \sum_{j=1}^n b_j\log\phi(\tau_j)=0$$
According to \Cref{lem:hodgefaithful}, the intersection of $P$ with the sub-cone $\Q_{\geq 0}^m\times 0$ corresponding to $F_1,\ldots, F_m$ is 
$0$.  We may therefore find an integral subdivision $\cF$ of the standard fan on $\R_{\geq0}^{m+n}$ for which $P$ is a union of cones and which does not refine the standard fan on the sub-cone $\R_{\geq 0}^m\times 0$.
If $\ol W$ is the closure of $W$, then this yields a $\cX'\setminus \ol W$ admissible monomial modification $\phi_W:\cX''_W\ra \cX'$. For the point $Q\in X(\ol K)$ specializing to $W$ as before, we see that $Q\in S^{\ell-pun}$ iff its specialization in the special fiber of $\cX''_W$ is contained in at least one divisor corresponding to a one-dimensional cone of $\cF$ not contained in $P$.  Let $Z_W\subset \cX''_W(\ol k)$ be the set of $\ol k$-points of the special fiber of the union of these divisors, and note that the special fiber of the strict transform of each $F_j$ is contained in $Z_W$.  

Let $\cX''\to\cX'$ be an admissible modification with $\cX''$ normal which factors as $\cX''\xrightarrow{\pi_W}\cX''_W\to\cX'$ for each $W$.  Then it follows that $\cX''$ is discerning with distinguished set $Z:=\bigcup_W \pi_W^{-1}(Z_W)$. 

It remains to show that $\cL$ extends if the monodromy is torsion-free.  Since $\cX''$ is normal, it suffices to show this locally.  Note that each fan $\cF$ may be chosen to be simplicial, so that $\cX''_W$ is semistable, in which case it is clear from the construction that $\cL$ extends to $\phi_W^{-1}(W)$. 



\end{proof}

\begin{corollary}\label{cor:distinguishedclosed}

    If $\cT$ is discerning, the distinguished set $Z\subset \cT(\ol k)$ is closed.
\end{corollary}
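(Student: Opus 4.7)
The strategy is to transfer the closedness known for semistable discerning models (via Proposition \ref{prop:trueforreg}(1)) to an arbitrary discerning $\cT$ by comparing the two through a graph construction.

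First, I would construct an auxiliary semistable discerning model. Let $X \to T$ be a resolution of singularities (Hironaka, in characteristic zero) applied component-wise, so that $X$ is smooth and proper over $K$ and the map is surjective; composing with $\phi$ makes $X$ a proper $\ol S$-scheme. After a finite extension of $K$, which does not affect the closedness question since $\ol k$ is unchanged, semistable reduction provides a semistable compactification of $X$ over $\cO_K$. Applying Proposition \ref{prop:trueforreg}(2) with simplicial fan subdivisions produces an admissible modification $\cX''$ that is simultaneously semistable and discerning, so by part (1) its distinguished set $Z'' \subset \cX''(\ol k)$ is closed.

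Next, form $\cY \subset \cX'' \times_{\cO_K} \cT$ as the closure of the graph of the generic-fiber map $X \to T$, with projections $\pi_1 : \cY \to \cX''$ and $\pi_2 : \cY \to \cT$. The first projection $\pi_1$ is an admissible modification, and $\pi_2$ is proper; since $\cY_K = X$ surjects onto $\cT_K = T$ and $\cT$ is $\cO_K$-flat, the image of $\pi_2$ is closed and contains a dense open, hence $\pi_2$ is surjective (and thus surjective on special fibers and on $\ol k$-points, as $\ol k$ is algebraically closed). Because every $\cT$-scheme inherits the discerning property, $\cY$ is discerning; computing its distinguished set both as a $\cT$-scheme and as an $\cX''$-scheme and invoking uniqueness yields
\[
 \pi_2^{-1}(Z) = Z_{\cY} = \pi_1^{-1}(Z'').
\]

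Finally, the right-hand side is closed in $\cY(\ol k)$ because $Z''$ is closed and $\pi_1$ is continuous, hence $\pi_2^{-1}(Z)$ is closed in $\cY(\ol k)$. Since $\pi_2$ is proper and surjective on $\ol k$-points,
\[
 Z = \pi_2(\pi_2^{-1}(Z))
\]
is the image of a closed set under the proper map $\pi_2$, and is therefore closed in $\cT(\ol k)$.

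The only delicate point is producing the semistable compactification of $X$, which in general requires a finite extension of $K$; this is harmless for the closedness statement. The heart of the argument is the graph-closure $\cY$: it is the geometric bridge that lets us simultaneously access the semistable model $\cX''$ (where Proposition \ref{prop:trueforreg}(1) applies) and $\cT$ (where the conclusion is needed), with the compatibility of the two distinguished-set descriptions forced by their uniqueness.
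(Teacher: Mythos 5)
Your proof is correct and follows essentially the same route as the paper's: dominate $\cT$ by a semistable compactification of a smooth proper $\ol S$-scheme, use Proposition~\ref{prop:trueforreg} to produce a discerning model upstairs with closed distinguished set, and push closedness down along a proper surjection onto $\cT$. You make explicit two points the paper compresses---the graph-closure $\cY\subset\cX''\times_{\cO_K}\cT$ together with the identification $\pi_2^{-1}(Z)=Z_\cY=\pi_1^{-1}(Z'')$ via uniqueness of distinguished sets, which is what makes the paper's expression ``$f^{-1}(Z)$'' meaningful, and the observation that the fans in Proposition~\ref{prop:trueforreg}(2) can be taken simplicial so that the modification $\cX''$ remains semistable and part~(1) applies to it---while substituting Hironaka plus semistable reduction (at the cost of a harmless finite extension of $K$) for the paper's direct appeal to \cite[Thm.\ 4.5]{BS}.
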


\begin{proof}
    By \cite[Thm 4.5]{BS}, there is a proper morphism $f:X\ra \ol{S}$ from a smooth proper scheme $X/K$ which admits a semistable compactification $\cX'$. By \ref{prop:trueforreg}, the set $f^{-1}(Z)$ is closed. Since $f$ is proper it follows that $Z$ is closed.
\end{proof}

Given a discerning model $\cT$ of $\phi:T\ra \ol S$, we define $\cT^\circ$ to be the complement of $\phi^{-1}(\ol S\backslash S)\cup\ol Z$.

\begin{proposition}\label{prop:Maincover}
    There exist finitely many discerning partial compactifications $\cS_1,\dots,\cS_m$ of $\ol{S}$ such that  every $\ol{K}$-point of $\ol{S}$ extends to an $\ol{\cO_{ K}}$-point of $\cS_i$ for at least some $i$. Moreover, if the monodromy-image of $\cL$ is torsion-free, we may pick the $\cS_i$ such that $\cL$ extends to $\cS_i^\circ$.
   
\end{proposition}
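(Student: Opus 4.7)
The plan is to produce the $\cS_i$ by applying Proposition \ref{prop:trueforreg} to a semistable alteration of $\ol S$. By \cite[Thm 4.5]{BS}, I first choose a proper surjective morphism $f:X\to \ol S$ with $X/K$ smooth proper and a semistable compactification $\cX'$ of $X$ over $\cO_K$. Applying Proposition \ref{prop:trueforreg}(2) to $(X,\cX')$ then yields an $X$-admissible modification $\cX''$ which is discerning as a model of the proper $\ol S$-scheme $X$; moreover, in the torsion-free monodromy case, $\cL$ extends to $\cX''^\circ$. I take this $\cX''$ as one of the $\cS_i$, with its $\ol S$-scheme structure provided by $f$.

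The covering property is then immediate from properness. Given $P\in \ol S(\ol K)$, surjectivity of $f$ yields a lift $Q\in X(\ol K)$; since $\cX''$ is an admissible modification of the proper $\cO_K$-scheme $\cX'$, it is itself proper over $\cO_K$, and so by the valuative criterion $Q$ extends uniquely to a point $\widetilde Q \in \cX''(\ol{\cO_K})$ with $f(\widetilde Q_{\ol K})=P$. Thus $P$ extends to an $\ol{\cO_K}$-point of $\cS_i$ (via the structure map $f$).

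The main technical obstacle is that \cite[Thm 4.5]{BS} may only produce the semistable alteration over an étale extension $\cO_{K'}/\cO_K$. To descend to $\cO_K$ I would apply Corollary \ref{complete descent model} together with Lemma \ref{etale descent model}; since the latter constructs the descent as a quotient by a finite group action, it is at this stage that several models $\cS_i$ rather than a single one may be required—accounting for the ``finitely many'' in the statement—and one must also verify that the descent preserves the discerning property (by pullback of the distinguished set) and, in the torsion-free monodromy case, the extension of $\cL$, the latter handled by Proposition \ref{prop:groupextend}.
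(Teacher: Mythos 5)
There is a genuine gap, and it is structural. The proposition asks for discerning \emph{partial compactifications of} $\ol S$, i.e.\ $\cO_K$-schemes $\cS_i$ containing $\ol S$ as a dense open. Your $\cX''$ is a discerning model of the alteration $X$, not of $\ol S$: it contains $X$ as a dense open, not $\ol S$, so it is simply not a partial compactification of $\ol S$. Consequently the phrase ``$P$ extends to an $\ol{\cO_K}$-point of $\cS_i$ (via the structure map $f$)'' does not mean what the proposition requires. Extending a point $P\in \ol S(\ol K)$ to $\cS_i(\ol{\cO_K})$ means finding an $\cO_K$-point of $\cS_i$ whose generic fiber \emph{is} $P$ under the open immersion $\ol S\hookrightarrow \cS_i$; lifting $P$ to $X$ and extending there produces an integral point of a model of a different scheme. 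This matters downstream: the proof of Proposition~\ref{prop:badprimes} embeds each $\cS_i$ into a proper model $\cS'_i$ of $\ol S$ and pulls the distinguished sets $Z_i$ back along admissible modifications $\cS_0\to\cS'_i$ of $\ol S$; none of that makes sense if $\cS_i$ is a model of an alteration.

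The real content of the proposition is exactly the descent you skip. The paper (Steps~1--3) does not take a single alteration; it uses \cite[Thm 4.5]{BS} to produce, for each finite set of points, an alteration $f_i:X_i\to \ol S$ that is \emph{finite \'etale} over an open $V_i\subset \ol S$, and by Noetherian induction a finite cover $\{V_i\}$. Finite \'etaleness over $V_i$ is what makes Lemma~\ref{etale descent model} applicable: after an admissible modification, $\cX''_i$ is a quotient cover over a projective compactification $\cY_i$ of $V_i$, and one checks the quotient $\cY_i$ is still discerning. Then $\cY_i\to \ol S$ is a $V_i$-admissible modification, and Lemma~\ref{lem:justtube} is used to choose models $\cY'_i$ whose bad locus is forced into a small analytic tube $U_i$ around the non-isomorphism locus $D_i$, so that $\cS_i:=\ol S\cup_{\ol S\setminus D_i}\bigl(\cY'_i\setminus \ol{\pi_i'^{-1}(D_i)}\bigr)$ is an actual partial compactification of $\ol S$ and the $U_i$ having empty common intersection guarantees coverage. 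Your diagnosis of where ``finitely many'' comes from is also off: it is not about descending an \'etale extension $\cO_{K'}/\cO_K$, but about the fact that each $X_i$ is only finite \'etale over an open $V_i\subsetneq \ol S$ and one needs finitely many such opens to cover $\ol S$. As written, your construction would at best give a single discerning model of an alteration of $\ol S$, which is neither a partial compactification of $\ol S$ nor sufficient for the gluing argument that follows.
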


\begin{proof}\hspace{.5in}
\vskip1em\noindent
\emph{Step 1.}  
By \cite[Thm 4.5]{BS}, for any finite set of points $F\subset \ol S(\ol{K})$ there is a proper morphism $f:X\ra \ol{S}$ from a smooth proper scheme $X/K$ which is finite \'etale over a neighborhood of $F$ such that $X$ admits a semistable compactification $\cX_i'$ over $\ol{\cO_{K}}$.  It follows by Noetherian induction that there is a finite set $\{X_i\}_{ i\in I}$ of such $\ol S$-schemes such that $f_i:X_i\to \ol S$ is finite \'etale over an open $V_i\subset \ol S$ and $\{V_i\}_{i\in I}$ is an open cover of $\ol S$.

\vskip1em\noindent
    \emph{Step 2.} For each $i\in I$, let $\cX_{i,0}'$ denote a discerning admissible modification of $\cX_i'$, using Proposition \ref{prop:trueforreg}. By \Cref{etale descent model}, for each $i$ there is a $f_i^{-1}(V_i)$-admissible modification $\cX_i''\to\cX_{i,0}'$, a projective compactification $\cY_i$ of $V_i$, and a quotient $\cX_i''\to \cY_i$ by a finite group $G_i$ which is \'etale on $f_i^{-1}(V_i)$.  The $\cX_i''$ are also discerning.  After taking normalizations we may assume $\cX_i''$ and $\cY_i$ are normal.  Since the morphism $X_i''\to \ol S$ factors through $X_i''\to Y_i$ on the level of function fields, it follows it factors through a $V_i$-admissible modification $\pi_i:Y_i\to\ol S$.  Thus, $\cY_i$ is discerning.  Let $D_i\subset \ol S$ be the open subset over which $\pi_i$ is not an isomorphism; since $ V_i\subset \ol S\setminus D_i$, we have $\bigcap_{i\in I}D_i=\varnothing$.
\vskip1em\noindent
    \emph{Step 3.} Let $U_i\subset \bar S^\an$ be neighborhoods of the $D_i^{\an}$ which have no intersection. By Lemma \ref{lem:justtube}, there is an admissble modification $\cY_i'\to \cY_i$ such that, setting $\pi_i':Y_i'\to \ol S$ to be the natural map, the complement of $\pi_i'^{-1}(U_i)$ is contained in the rigid fiber of $\cY_i'\backslash \ol{\pi_i'^{-1}(D_i)}$.  Let $\cX_i^\dagger\to \cY_i'$ be the normalization of the reduction of the base-change of $\cX''_i\to \cY_i$. Then $\cY_i'$ is the quotient of $\cX_i^\dagger $ by $G_i$ since both are normal.  We set $\cS_i:=\ol S\cup_{\ol S\backslash D_i}\cY_i'\backslash \ol{\pi_i'^{-1}(D_i)}$.  By construction, every $\cS_i$ is discerning, since every integral point of $\cS_i$ is an integral point of $\cY_i'\backslash \ol{\pi_i'^{-1}(D_i)} $.  Moreover, every $\ol {K}$-point of $\ol S$ is contained in the complement of one of the $U_i$, hence extends to an $\ol{\cO_{K}}$ point of $\cS_i$.
    \vskip1em\noindent\emph{Step 4.} Finally, if the monodromy is torsion-free, then by Proposition \ref{prop:trueforreg} we may pick $\cX'_{i,0}$ such that $\cL$ extends to $\big(\cX'_{i,0}\big)^\circ$ and therefore $\big(\cX^{\prime\prime}_i\big)^{\circ}$ and $(\cX_i^{\dagger})^\circ$, in which case it'll extend to each of the $(\cY_i')^{\circ}\backslash \ol{\pi_i'^{-1}(D_i)} $ by Proposition \ref{prop:groupextend}, and finally to $\cS^{\circ}_i$ by construction. This completes the proof.

\end{proof}

We now complete the proof of Proposition \ref{prop:badprimes}. Let $Z_i\subset \cS_{i}$ denote the closed subschemes as in \Cref{discerning}. By Nagata's compactification theorem we may embed each $\cS_i$ in a proper $\cS'_i$, which is therefore a proper model of $\ol{S}$.

Let $\cS_0$ be a normal model of $\ol S$ with admissible modifications $g_i:\cS_0\ra \cS'_i$ for each $i$. We claim that $\cS_0$ is a discerning. Define $Z_0\subset \cS_0$ by $Z_0:=\bigcup_{i\in I}g_i^{-1}(Z_i)$.  Clearly $Z_0$ contains $\ol S\setminus S$.  A $\bar K$-point $x$ of $S$ extends to an integral point of $\cS_0$, and the extension specializes to $Z_0$ if and only if $x$ extends and in some $\cS_i$ and specializes to $Z_i$, which is the case if and only if it is not potentially unramified.   

By Definition \ref{discerning} and Corollary \ref{cor:distinguishedclosed}, the set $\ol{Z_0}\bigcup(\ol S\backslash S)$ is closed.  By taking $\cS$ to be the complement of $\ol{Z_0}\bigcup(\ol S\backslash S)$ in $\cS_0$, we obtain the desired model of $S$.  

Finally, observe by construction and Proposition \ref{prop:Maincover} that if the monodromy is torsion-free, then $\cL$ extends locally across every point of $\cS$. Since $\cS$ is normal this extension is canonical, and hence $\cL$ extends to all of $\cS$.  This completes the proof of Proposition \ref{prop:badprimes}, and thus Theorem \ref{main}.\qed

\section{Proof of Theorem \ref{Main:CM}}

 By \cite[\S12]{milnemoduli} all CM points lie in $S^{\ell-pun}_v$ for all $v\nmid\ell$. Let $\ell_1$ be an odd rational prime inert in $E$. Let $\cS_1,\cS_2$ be the integral models in Theorem \ref{main} corresponding to $\ell=\ell_1,\ell_2$ respectively. We may simply form $\cS$ by gluing $\cS_1\times_{\cO_E}\cO_E[\frac{1}{\ell_1}]$ with $\cS_2\times_{\cO_E}{\cO_{E,(\ell_1)}}$ along $S$, and the theorem is proved. \qed

  \bibliography{main}
  \bibliographystyle{plain}




\end{document}